\newtheorem{theorem}{Theorem}[section]
\newtheorem{proposition}[theorem]{Proposition}
\newtheorem{lemma}[theorem]{Lemma}
\theoremstyle{definition}
\newtheorem{definition}[theorem]{Definition}
\newtheorem{notation}[theorem]{Notation}
\newtheorem{question}[theorem]{Question}
\newtheorem{remark}[theorem]{Remark}
\newcommand{\RR}{ \ensuremath{\mathbb{R}}}
\newcommand{\po}[1]{\mathcal{#1}}
\newcommand{\conv}{\ensuremath{\mathrm{conv}}\hspace{1pt}}
\def\moverlay{\mathpalette\mov@rlay}
\def\mov@rlay#1#2{\leavevmode\vtop{%
		\baselineskip\z@skip \lineskiplimit-\maxdimen
		\ialign{\hfil$\m@th#1##$\hfil\cr#2\crcr}}}
\newcommand{\charfusion}[3][\mathord]{
	#1{\ifx#1\mathop\vphantom{#2}\fi
		\mathpalette\mov@rlay{#2\cr#3}
	}
	\ifx#1\mathop\expandafter\displaylimits\fi}
\newcommand{\lk}{{\mathrm{lk}}}
\numberwithin{equation}{section}
\title{Octahedralizing $3$-colorable $3$-polytopes}
\author[G. Codenotti]{Giulia Codenotti}
\email{giulia.codenotti@fu-berlin.de}
\address{
Institut f\"{u}r Mathematik, Freie Universit,
Arnimallee,
14195 Berlin, GERMANY
}
\author[L. Venturello]{Lorenzo Venturello}
\email{lorenzo.venturello@uni-osnabrueck.de}
\address{
Universit\"{a}t Osnabr\"{u}ck,
Fakult\"{a}t f\"{u}r Mathematik,
Albrechtstra\ss e 28a,
49076 Osnabr\"{u}ck, GERMANY
}
\keywords{Cross-polytopal complex, cross-polytope, polytopal subdivisions,  3-colorable 3-polytopes}
\date{\today}
\thanks{
The first author was supported by the Einstein Foundation Berlin. The second author was supported by the German Research Council DFG GRK-1916.
}
\subjclass[2010]{52B10, 52B12}
\begin{document}
\maketitle
\begin{abstract}
    We investigate the question of whether any $d$-colorable simplicial $d$-polytope can be octahedralized, i.e., can be subdivided to a $d$-dimensional geometric cross-polytopal complex. We give a positive answer in dimension $3$, with the additional property that the octahedralization introduces no new vertices on the boundary of the polytope.
\end{abstract}

\section{Introduction and preliminaries}
The study of triangulations is a central theme in discrete geometry and beyond. In words, a triangulation of a polytope is a decomposition as a union of simplices which intersect properly along common faces, and it is not hard to see that any polytope can be triangulated (see \cite{DRS} for more about triangulations). In this paper we consider a different decomposition for simplicial $d$-polytopes which are \emph{balanced}, i.e., their graph is $d$-colorable, in the classical graph theoretic sense. Clearly since the graph of any $(d-1)$-simplex is the complete graph on $d$ vertices, $d$ is the minimum chromatic number that the graph of any simplicial $d$-polytope can have. Balanced simplicial complexes were introduced by Stanley \cite{St79} and recently they have gained attention from the point of view of face enumeration \cite{KN, JKM, Juhnke:Murai:Novik:Sawaske, Ven}. For results of a more topological flavour regarding balancedness and colorings we refer to \cite{Fisk1977, Izmestiev:Joswig, IKN, JKV}. Under many perspectives it appears that, when dealing with balanced complexes, the cross-polytope, which is easily seen to be balanced, plays the fundamental role played by the simplex in the setting of arbitrary complexes. The starting point of this paper is a lemma in \cite{IKN}, where the authors describe a procedure to systematically convert a balanced simplicial complex into a \emph{(combinatorial) cross-polytopal complex}; that is to say, a pure regular CW-complex in which the boundary of each maximal cell is combinatorially isomorphic to the boundary complex of a cross-polytope. We investigate a geometric version of this statement, which asks for the existence of a geometric cross-polytopal complex decomposing (in $d=3$ "octahedralizing") balanced $d$-polytopes. We proceed now with some basic definitions (see \cite{ZieglerBook} for basics on polytopes).\\

The \emph{regular $d$-dimensional cross-polytope} $\Diamond_d$ is the polytope $\conv(\pm e_1,\dots, \pm e_d) \subseteq \RR^d$, where $\{e_i\}^d_{i=1}$ is the canonical basis of $\RR^d$. 
We say that two $d$-polytopes are \emph{combinatorially equivalent} (and we denote it by $\simeq$) if their face lattices are isomorphic. For the rest of this paper we will call \emph{$d$-dimensional cross-polytope} any convex polytope combinatorially isomorphic to the regular cross-polytope. A polytope $\mathcal{P}$ is \emph{$k$-colorable} if its graph is $k$-colorable in the classical graph-theoretic sense, i.e., if there exists a map $\kappa:V(\mathcal{P})\rightarrow[k]$ such that $\kappa(V)\neq\kappa(W)$ whenever $V$ and $W$ are the vertices of an edge. Note that if $\mathcal{P}$ is a simplicial $d$-polytope then the graph of a facet is the complete graph on $d$ vertices, and so a simplicial $d$-polytope cannot be $k$-colorable for any $k<d$. In the literature, a $d$-colorable simplicial $d$-polytope, or more generally a $d$-colorable $(d-1)$-dimensional simplicial complex, is often called \emph{balanced}.

A \emph{(geometric) polytopal complex} $\Delta$ in $\RR^d$  is a collection of polytopes in $\RR^d$ such that 
\begin{itemize}
    \item if $\mathcal{F}_i, \mathcal{F}_j \in \Delta$, then $\mathcal{F}_i \cap \mathcal{F}_j$ is a (possibly empty) face of both $\mathcal{F}_i$ and $\mathcal{F}_j$;
    \item if $\mathcal{G}$ is a face of $\mathcal{F}$, and $\mathcal{F} \in \Delta$, then $\mathcal{G} \in \Delta$.
\end{itemize}
Elements of a polytopal complex $\Delta$ are called \emph{cells}. We denote by $f_i(\Delta)$ the number of $i$-dimensional cells of the complex and a complex is called \emph{pure} if all maximal cells have the same dimension. If all cells are simplices, the complex is called a simplicial complex. We are interested in a different specialization of polytopal complexes:
\begin{definition}
A \emph{(geometric) cross-polytopal complex} $\Delta$ in $\RR^d$ is a pure polytopal complex where all maximal cells are cross-polytopes.
We call the \emph{support} of the complex $\Delta$ the set $\left|\Delta\right|=\bigcup_{\mathcal{C}\in\Delta}\mathcal C$ and we say that $\Delta$ is a  \emph{cross-polytopal subdivision} of  $\left|\Delta\right|$. 
\end{definition}
Moreover, we denote with $\partial\Delta$ the \emph{boundary} of $\Delta$, that is the simplicial complex generated by $(d-1)$-dimensional cells that belong to a unique maximal cell of $\Delta$. 
A cross-polytopal subdivision $\Delta$ of a simplicial polytope $\mathcal{P}$ such that $\partial\Delta\simeq\partial\mathcal{P}$ is called \emph{proper}. Somehow informally, we can use the word \emph{octahedralization} to refer to cross-polytopal subdivisions of $3$-polytopes.

With these definitions, we can precisely formulate the question at the heart of this paper:
\begin{question}\label{quest:main}
 Does every balanced $d$-polytope have a proper cross-polytopal subdivision?
\end{question}

The balanced property in \Cref{quest:main} is necessary, as the following proposition shows.

\begin{proposition}\label{prop:bal_last}
    Let $\Delta$ be a $d$-dimensional (combinatorial) cross-polytopal complex whose support is PL-homeomorphic to a $d$-ball. Then the $(d-1)$-skeleton of $\Delta$, i.e., the simplicial complex $\{F\in \Delta: \dim(F)<d\}$, is a balanced $(d-1)$-dimensional simplicial complex.
\end{proposition}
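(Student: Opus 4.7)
The plan is to build a $d$-coloring of $V(\Delta)$ out of the antipodal pair structure inside each cross-polytopal cell. In each maximal cell $\sigma$ (a combinatorial $d$-cross-polytope), the $2d$ vertices split canonically into $d$ \emph{antipodal pairs} --- those pairs that do not span an edge of $\sigma$. I would let $\sim$ be the equivalence relation on $V(\Delta)$ generated by ``being antipodal in some maximal cell'' and take its classes as the candidate colors. Since every edge of the $(d-1)$-skeleton is a face of some maximal cell (by purity) and its endpoints cannot be antipodal in that cell, they lie in distinct classes. So once we know $\sim$ has exactly $d$ classes --- equivalently, that no two distinct antipodal pairs of a common cell get identified --- the induced coloring is automatically proper.

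I would prove this key identification via a monodromy argument on the dual graph of $\Delta$. Each cell $\sigma$ carries a local color set $C_\sigma$ of size $d$ (its antipodal pairs), and whenever $\sigma,\sigma'$ share a facet $F$, the $d$ vertices of $F$ represent every antipodal pair of both cells exactly once, inducing a canonical bijection $\phi_{\sigma\sigma'}\colon C_\sigma\to C_{\sigma'}$. The coloring globalizes precisely when these bijections satisfy the cocycle condition along every loop of the dual graph. Since $|\Delta|$ is a PL $d$-ball and hence simply connected, it is enough to check this on the generating loops, namely the cycles of cells around interior $(d-2)$-faces. For such a face $F\subset\sigma$, the $d-1$ vertices of $F$ occupy $d-1$ distinct antipodal pairs of $\sigma$, and the remaining pair is then forced to consist of the two ``extra'' vertices $w',w''$ of $\sigma$ that appear in the two facets of $\sigma$ through $F$. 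Traversing the link of $F$, one reads off that the cyclic sequence $w_1,\ldots,w_k$ has $w_i,w_{i+1}$ antipodal in the $(i{+}1)$-th cell, so propagating a coloring with $F$'s vertices colored $1,\ldots,d-1$ forces every $w_i$ to carry the remaining color $d$, and the initial assignment is recovered after a full loop.

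The last ingredient --- and the main bookkeeping obstacle --- is the standard PL-topological passage from triviality around codim-$2$ faces to triviality on all loops in the dual graph: attaching a $2$-cell at each interior $(d-2)$-face produces a $2$-complex with the same fundamental group as $|\Delta|$, which is trivial. Granting this, the $\sim$-classes give a globally consistent assignment $\kappa\colon V(\Delta)\to\{1,\ldots,d\}$ that restricts on each cell to the antipodal-pair partition, hence a balanced coloring of the $(d-1)$-skeleton.
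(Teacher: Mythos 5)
Your argument is correct in outline and takes a genuinely different route from the paper. The paper stellarly subdivides each cross-polytopal cell at an interior point $C_F$, checks that in the resulting simplicial ball every interior $(d-2)$-face has an even link (a $4$-cycle inside a single cell, or a $2k$-cycle alternating between the apices $C_{F_i}$ and the ``extra'' vertices $w_i$), and then invokes Joswig's criterion that such a ball is balanced, finally deleting the color class of the $C_F$'s. You instead work directly with the cross-polytopal cells and rederive balancedness by hand: your local color sets of antipodal pairs, the transition bijections across shared facets, and the holonomy computation around interior $(d-2)$-faces are precisely the ``group of projectivities'' argument underlying the result the paper cites, so in effect you are inlining the proof of \cite[Corollary 11]{Jos} rather than reducing to it. Your local computation --- that the residual antipodal pair of the $i$-th cell around $F$ is $\{w_{i-1},w_i\}$, so the monodromy around $F$ is the identity --- is correct, and is the same combinatorial fact the paper encodes as ``the link of $F$ in the subdivision is a $2k$-cycle.'' The paper's subdivision buys a citable black box; your version is self-contained but must carry the topological input explicitly.

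Two points you leave implicit should be made explicit. First, the passage from trivial holonomy around interior codimension-$2$ faces to trivial holonomy on all dual loops needs the statement for manifolds \emph{with boundary}; the paper faces the identical issue (the cited corollary is stated for closed manifolds) and resolves it by coning over the boundary, and you should do likewise or cite a boundary version. Second, after trivializing the local system you must still check that the induced vertex coloring is well defined, i.e., that a vertex $v$ contained in two cells sharing only lower-dimensional faces receives the same color from both; this holds because the cells containing $v$ are connected through facets containing $v$ (the link of $v$ in a PL ball is a PL ball or sphere, hence strongly connected), but it does not follow from the dual-graph monodromy alone and is needed to conclude that no two antipodal pairs of a common cell are identified under your relation $\sim$.
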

\begin{proof}
    Let $\Delta'$ be the combinatorial $d$-ball (i.e., a simplicial complex PL-homeomorphic to the $d$-simplex) obtained by (stellar) subdividing all $d$-cells of $\Delta$ introducing a point $C_{F}$, for each $d$-cell $F$. The interior $(d-2)$-faces of $\Delta'$ can be subdivided in two sets:
    \begin{itemize}
        \item The faces containing one of the vertices $C_F$;
        \item The $(d-2)$-faces of $\Delta$.
    \end{itemize}
    For each $(d-2)$-faces $G$ we consider the \emph{link} of $G$ in $\Delta'$, i.e.,
    \[\lk_{\Delta'}(G):=\{H\in\Delta': G\cap H=\emptyset, G\cup H \in\Delta'\}.\]
    The link of a $(d-2)$-face containing one of the vertices $C_F$ is a $1$-sphere contained in the corresponding cross-polytopal cell, and therefore it is a $4$-cycle. Let $G$ be any $(d-2)$-face not containing any $C_i$ and assume that $G$ lies in the cross-polytopal cells $\{F_i:i=1,\dots k\}$. Then the link of $G$ is the $2k$-cycle given by the vertices $C_{F_i}$ connected with the only pair of antipodal vertices of $F_i$ not intersecting $F_i$. By \cite[Corollary 11]{Jos} we know that a triangulation of a ball in which all links of interior $(d-2)$-faces are even polygons is balanced. In particular $\Delta'$ is a $(d+1)$-colorable simplicial $d$-ball and the (unique up to permutation of the images) coloring assigns to the vertices $C_F$ the same color, say $d+1$. Since it is elementary to check that the $C_F$'s are the only vertices colored with $d+1$, the claim follows. 
\end{proof}
In particular, since full dimensional subcomplexes of balanced simplicial complexes are balanced, we have that $\partial\Delta$ is a balanced $(d-1)$-sphere.
\begin{remark}
    To be precise, \cite[Corollary 11]{Jos} shows that a simply connected combinatorial $d$-manifold without boundary in which all links of $(d-2)$-faces are even polygons is balanced. The arguments in the proof of \cite[Theorem 8]{Jos} can be extended to manifolds with boundary. Alternatively, one can consider the combinatorial $d$-sphere obtained coning over the boundary of $\Delta'$ in the proof of \Cref{prop:bal_last}, and then argue with \cite[Corollary 11]{Jos}.
\end{remark}
\begin{figure}[h]
\includegraphics[scale=1]{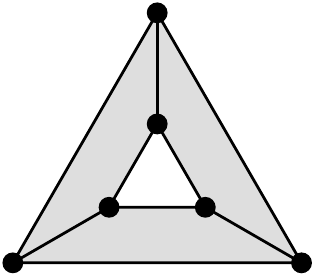}
\caption{A $2$-dimensional cross-polytopal complex that is $3$-colorable but not $2$-colorable.}\label{fig:strongly_conn_not_balanced}
\end{figure}
Observe that if $\left|\Delta\right|$ is not simply connected, \Cref{prop:bal_last} does not hold, as it can be seen in the example in \Cref{fig:strongly_conn_not_balanced}. 

The structure of the paper is as follows. After outlining in \Cref{sect:bipyramid} a general strategy to study \Cref{quest:main} in arbitrary dimension, \Cref{sect:schlegel,sect:dim3} are devoted to answering it, positively, in dimension $d=3$.

Let us recall some notation in dimension $3$: the regular $3$-dimensional cross-polytope is called \emph{regular octahedron}, and we thus denote by \emph{octahedron} any $3$-polytope which is combinatorially equivalent to the $3$-dimensional cross-polytope. A \emph{tetrahedron} is a $3$-dimensional simplex. A summary of our results in dimension $3$ is:
\begin{theorem}\label{thm:3-colorable}
    Let $\mathcal{P}$ be a balanced $3$-polytope. Then there exists a proper cross-polytopal subdivision of $\mathcal{P}$. In particular, there is one such subdivision $\Delta$ with $f_3(\Delta)=23(f_0(\mathcal{P})-2)$.
\end{theorem}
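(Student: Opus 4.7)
The strategy is to reduce the question to a two-dimensional combinatorial one via a Schlegel diagram and then octahedralize by assembling local cross-polytopal templates whose gluing is controlled by the $3$-coloring $\kappa$ of $\mathcal{P}$. Concretely, pick an interior point $v_0 \in \mathcal{P}$ and decompose $\mathcal{P}$ as the union of the $2(f_0(\mathcal{P})-2)$ tetrahedra $T_F := \conv(\{v_0\} \cup F)$ as $F$ ranges over the facets of $\mathcal{P}$; equivalently, view $\partial\mathcal{P}$ via its Schlegel diagram from some distinguished facet, a $3$-colored triangulation of a triangle with $f_0(\mathcal{P})-3$ interior vertices, which is the planar object produced by \Cref{sect:schlegel}.

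Using the bipyramidal framework of \Cref{sect:bipyramid}, I would then design a geometric octahedralization of a single tetrahedron $T_F$ with two key properties: first, that no new vertices are introduced on the outer face $F \subset \partial\mathcal{P}$, and second, that the cross-polytopal subdivision induced on each of the three interior triangular faces $\conv(\{v_0\} \cup e)$, for $e$ an edge of $F$, depends only on the unordered color pair $\{\kappa(u),\kappa(w)\}$ at the endpoints of $e$. Applying this template to every $T_F$ produces compatible subdivisions on every shared interior face, since adjacent tetrahedra share an edge with a fixed color pair and thus induce the same pattern; the local pieces then assemble into a global cross-polytopal complex $\Delta$ with $\partial\Delta \simeq \partial\mathcal{P}$, giving a proper octahedralization. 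If the template averages $23/2$ octahedra per tetrahedron (for example by combining two mirrored versions contributing $11$ and $12$ octahedra respectively, distributed according to some bipartition of the facets induced by $\kappa$), then summing over the $2(f_0(\mathcal{P})-2)$ facets yields the desired count $f_3(\Delta) = 23(f_0(\mathcal{P})-2)$.

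The main obstacle is the explicit construction of this colored local template: realizing, inside a single tetrahedron in $\RR^3$, a convex cross-polytopal subdivision that leaves the designated outer face untouched, prescribes the subdivision on each of the other three triangular faces solely from the two colors appearing on the face's non-apex edge, and does so consistently across the complete palette of color pairs simultaneously. This is the geometric heart of the proof, where the combinatorial $3$-coloring must be turned into rigid coordinates in $\RR^3$ and convexity of every cell verified, and it is exactly where both the bipyramidal viewpoint of \Cref{sect:bipyramid} and the planar analysis of \Cref{sect:schlegel} do the heavy lifting.
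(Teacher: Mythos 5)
Your proposal correctly identifies the coarse skeleton (cone $\mathcal{P}$ from an interior point, octahedralize the resulting tetrahedra locally, glue), but it leaves unproved exactly the step that constitutes the entire proof, and the bookkeeping around that step does not work. The paper does not octahedralize single tetrahedra and then match them along color-determined face subdivisions. Instead, \Cref{lem:triangulation} uses the $4$-coloring of the cone triangulation to pair the tetrahedra two by two along their $\{2,3,4\}$-colored facets into \emph{generalized bipyramids}; the boundary of each bipyramid, after stellarly subdividing one distinguished edge of the equatorial triangle, is combinatorially $\partial\Diamond_3$ (\Cref{rem:degeneration}), and \Cref{prop:bipyramid} transports onto this degenerate octahedron the Schlegel diagram of the $24$-cell --- a subdivision of the octahedron into $23$ octahedra, which is what \Cref{sect:schlegel} actually supplies, not a planar diagram of $\mathcal{P}$. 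Compatibility between adjacent bipyramids is then automatic rather than engineered: the subdivision of each bipyramid adds no boundary vertices except the single barycenter on its distinguished edge, and the flag condition in \Cref{lem:triangulation} guarantees that when two bipyramids share a triangle containing a subdivided edge, that edge is the distinguished one for both. This replaces your much stronger, and unconstructed, requirement that each interior triangle $\conv(\{v_0\}\cup e)$ receive a subdivision determined by the color pair of $e$ and agreeing geometrically (not just combinatorially) from both sides.

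The concrete gaps are these. First, you never construct the local template and explicitly defer it as ``the geometric heart of the proof,'' so the proposal is a plan, not a proof. Second, the count of $23/2$ octahedra per tetrahedron via ``two mirrored versions contributing $11$ and $12$ octahedra'' has no construction behind it and does not reflect how the number $23$ arises: in the paper's subdivision of a bipyramid the $23$ cells do not partition between the two constituent tetrahedra (the inner octahedron of type $4$, for instance, straddles the equatorial triangle), so no $11/12$ split exists. Third, a genuinely per-tetrahedron construction is possible (\Cref{rem:Eran}), but it yields $23$ octahedra for \emph{each} tetrahedron, hence $46(f_0(\mathcal{P})-2)$ in total, and it introduces new vertices on $\partial\mathcal{P}$; that route proves \Cref{thm:3simplex}, not the properness and the count claimed in \Cref{thm:3-colorable}.
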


If we admit subdivisions which are not proper, then we can prove the following.
\begin{theorem}\label{thm:3simplex}
    Any tetrahedron has a (non-proper) cross-polytopal subdivision $\Delta$ such that $f_3(\Delta)=23$ and $\partial\Delta\simeq\partial\Diamond_3$.
\end{theorem}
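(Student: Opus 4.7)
The plan is to reduce \Cref{thm:3simplex} to the balanced setting handled by \Cref{thm:3-colorable}, via a preliminary refinement of the boundary. The tetrahedron itself is not balanced (its graph is $K_4$), so \Cref{thm:3-colorable} does not apply directly, but after introducing only two auxiliary boundary vertices the situation becomes combinatorially that of an octahedron.

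\textbf{Step 1: Refining the boundary.} Let $T = \conv(a,b,c,d)$ and choose two disjoint (skew) edges, say $ab$ and $cd$. Place a new vertex $m$ at the midpoint of $ab$ and $n$ at the midpoint of $cd$. Each of the four triangular facets of $T$ contains exactly one of these two edges and therefore splits canonically into two triangles: the facets $abc$ and $abd$ are each cut by the chord from $m$ to the opposite vertex, while $acd$ and $bcd$ are cut by the chord from $n$. A direct count yields a triangulation of $\partial T$ with $6$ vertices, $12$ edges, and $8$ triangles. Checking degrees shows that every vertex has degree $4$, and the only non-adjacent pairs are $\{a,b\}$, $\{c,d\}$, $\{m,n\}$; hence the refined $\partial T$ is combinatorially isomorphic to $\partial\Diamond_3$, with balanced coloring $\{a,b\}\mapsto 1$, $\{c,d\}\mapsto 2$, $\{m,n\}\mapsto 3$.

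\textbf{Step 2: Octahedralizing the interior.} Once the boundary is balanced, the next step is to extend the refined triangulation of $\partial T$ to a geometric cross-polytopal subdivision of the interior of $T$. I would do this by invoking (a mild variant of) the octahedralization procedure developed for \Cref{thm:3-colorable}, which takes as input a polytope together with a $3$-colored boundary triangulation and produces an interior cross-polytopal complex extending it. Applying this to $T$ with its refined boundary yields a complex $\Delta$ supported on $T$ with $\partial\Delta\simeq\partial\Diamond_3$, which is non-proper in the sense introduced earlier because the boundary is combinatorially $\partial\Diamond_3$ rather than $\partial T$.

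\textbf{The main obstacle.} The delicate point is the exact count $f_3(\Delta)=23$. I expect this to follow from a direct enumeration of the octahedra built during the construction: the tetrahedron case, viewed through its octahedral boundary, functions as the minimal atomic building block, and the constant $23$ is precisely the ``cost per excess vertex'' governing the formula $f_3=23(f_0(\mathcal{P})-2)$ in \Cref{thm:3-colorable}. Geometric convexity of each resulting $3$-cell follows from a sufficiently generic placement of the interior vertices inside $T$, since convexity is an open condition and there is no boundary constraint forcing degeneracies.
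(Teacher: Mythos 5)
Your Step 1 is correct and is a legitimate variant of what the paper actually does: the paper's proof of \Cref{thm:3simplex} is \Cref{rem:Eran}, which also refines $\partial T$ into a $6$-vertex complex isomorphic to $\partial\Diamond_3$, but via the flag subdivision (stellar subdivision at a facet and then at one of its edges, giving antipodal pairs consisting of two original vertices, a vertex and an edge midpoint, and a vertex and a facet barycenter), whereas you take the midpoints of two opposite edges and get the pairs $\{a,b\},\{c,d\},\{m,n\}$. Your count of $6$ vertices, $12$ edges, $8$ triangles and the non-adjacency check are right, so your refined boundary is indeed $\partial\Diamond_3$.

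The gap is in Step 2, which is where all the content lives. There is no procedure in the paper that ``takes as input a polytope together with a $3$-colored boundary triangulation and produces an interior cross-polytopal complex extending it'': the construction of \Cref{sect:dim3} is anchored on the metric conclusion of \Cref{rem:barycenters}, namely that the antipodal pairs $X_i,Y_i$ lie in a nested chain of linear spaces $\mathcal{L}_1\subsetneq\mathcal{L}_2\subsetneq\cdots$ through a common interior point $O$, with $X_i$ and $Y_i$ separated by $\mathcal{L}_{i-1}$ inside $\mathcal{L}_i$. This separation structure is exactly what makes \Cref{lem:from_simplex_to_cp} (the inner octahedron) work, and everything downstream (\Cref{lem:cubeoctahedron,lem:type3,lem:type12}, hence \Cref{prop:bipyramid}) depends on it. Since your boundary refinement differs from the flag subdivision, you must re-verify this structure for your configuration before invoking the machinery. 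It does hold: take $O$ to be the centroid of $T$, which is the midpoint of $mn$; let $\mathcal{L}_1$ be the line $mn$ and $\mathcal{L}_2$ the plane spanned by the lines $ab$ and $mn$ (they meet at $m$); then $a$ and $b$ lie on opposite sides of $\mathcal{L}_1$ in $\mathcal{L}_2$, and $c$ and $d$ lie on opposite sides of $\mathcal{L}_2$ because $n\in\mathcal{L}_2$ is the midpoint of $cd$. But this verification is the proof, and cannot be delegated to an unspecified ``mild variant.'' Two smaller points: the count $f_3(\Delta)=23$ is not something to ``expect from a direct enumeration'' after the fact --- it is the count $1+6+8+8$ of the four types of octahedra in the Schlegel diagram of the $24$-cell, built explicitly in \Cref{prop:bipyramid}, and it comes for free once the construction is carried out; and genericity is not the right mechanism for convexity --- the cells are convex hulls by definition, and what must be shown is that each is combinatorially a cross-polytope (via \Cref{lem:antipodal} and explicit separation arguments) and that the pieces intersect along common faces, neither of which follows from ``convexity is an open condition.''
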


\section{A strategy for octahedralizations via bipyramids}\label{sect:bipyramid}
The following lemma, which employs an idea discussed in \cite[Lemma 3.6]{IKN}, shows a first attempt to reduce \Cref{quest:main} to the problem of decomposing certain generalized bipyramids into cross-polytopes. These bipyramids arise from a natural matching on the $d$-simplices of a triangulation of a balanced simplicial $d$-polytope induced by the coloring. 

\begin{lemma}\label{lem:triangulation}
Let $\mathcal{P}$ be a balanced $d$-polytope. Then $\mathcal{P}$ can be triangulated in a way such that:
\begin{itemize}
    \item The $d$-simplices in the triangulation can be partitioned in pairs sharing a facet; this facet is a $(d-1)$-simplex, which we call the \emph{equatorial simplex};
    \item For each equatorial simplex $\po{E}$, there exists a flag of faces $F_0\subset F_1 \subset \dots \subset F_{d-2}$ of $\po{E}$, with $\dim(F_i)=i$, such that: if any $F_i$ is contained in another pair of $d$-simplices then it is contained in their equatorial simplex. 
\end{itemize}
\end{lemma}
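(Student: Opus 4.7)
The plan is to triangulate $\mathcal{P}$ by picking any interior point $v$ and forming the cone $\mathcal{T} := v \ast \partial\mathcal{P}$; since $\partial\mathcal{P}$ is already simplicial (by balancedness) and $\mathcal{P}$ is convex, this is a valid triangulation of $\mathcal{P}$ whose $d$-simplices are exactly the joins $v \ast \sigma$ as $\sigma$ ranges over the facets of $\partial\mathcal{P}$.

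Next, I would use a fixed coloring $\kappa : V(\mathcal{P}) \to [d]$ to pair these $d$-simplices. Each boundary facet $\sigma$ has a unique $(d-2)$-face $G_\sigma$ avoiding color $d$, namely the face opposite the color-$d$ vertex of $\sigma$. Because $\partial\mathcal{P}$ is a simplicial $(d-1)$-sphere, $G_\sigma$ is shared by exactly one other facet $\sigma'$, and the same description applied to $\sigma'$ gives $G_{\sigma'} = G_\sigma$. Hence the ``missing color $d$'' rule defines a perfect matching on the facets of $\partial\mathcal{P}$, which lifts to a pairing of the $d$-simplices of $\mathcal{T}$: the pair $(v \ast \sigma,\, v \ast \sigma')$ shares the $(d-1)$-simplex $\po{E} := v \ast G_\sigma$, which I declare to be the equatorial simplex. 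The main technical point here is simply the well-definedness of the matching, which rests on balancedness (a unique color-$d$ vertex per facet) together with $\partial\mathcal{P}$ being a sphere (each $(d-2)$-face lying in exactly two facets).

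For the flag inside $\po{E}$, write $G_\sigma = \{w_1, \dots, w_{d-1}\}$ with $\kappa(w_j) = j$, and set $F_i := \{v, w_1, \dots, w_i\}$ for $i = 0, 1, \dots, d-2$, a nested chain of faces of $\po{E}$ of the required dimensions. To verify the condition, suppose $F_i$ lies in a $d$-simplex $v \ast \tau$ of $\mathcal{T}$ belonging to a different pair. Then $\tau$ must contain $\{w_1, \dots, w_i\}$, and since these vertices have colors different from $d$, the color-$d$ vertex of $\tau$ is distinct from them; therefore $\{w_1, \dots, w_i\} \subset G_\tau$, and so $F_i \subset v \ast G_\tau$, the equatorial simplex of the pair containing $v \ast \tau$. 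Beyond this color bookkeeping the argument should require no further ideas; the only substantive input is the interplay between the distinguished color $d$ and the spherical topology of $\partial\mathcal{P}$, which makes the matching on facets a true perfect matching.
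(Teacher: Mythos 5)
Your proposal is correct and follows essentially the same route as the paper: cone over an interior point, use the $d$-coloring of $\partial\mathcal{P}$ (with the apex as the $(d+1)$-st color) to match boundary facets along the codimension-one face missing a distinguished color, and define the flag by nested color classes so that containment in any pair forces containment in its equatorial simplex. The only cosmetic differences are that the paper phrases this for an arbitrary $(d+1)$-colorable triangulation restricting to $\partial\mathcal{P}$ (with the cone as the simplest instance) and singles out color $1$ rather than color $d$.
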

\begin{proof}
    Let $\Phi$ be any triangulation of $\po{P}$ whose graph is $(d+1)$-colorable, and whose boundary coincides with the boundary of $\po{P}$. The simplest example of such a triangulation is the one whose cells are the cones over every facet of $\po{P}$ from a fixed interior point. We color the vertices of $\Phi$. Since $\po{P}$ is $d$-colorable, we can assume that all the vertices of color $d+1$ are in the interior. Hence no $(d-1)$-simplices whose vertices are colored with colors $\{2,\dots,d+1\}$ are on the boundary of $\po{P}$. Each such $(d-1)$-simplex is therefore a facet of exactly two $d$-simplices of $\Phi$, which we take as our pairs. The $(d-1)$-simplices colored with $\{2,\dots,d+1\}$ are thus the equatorial simplices, and indeed each simplex of $\Phi$ contains exactly one such simplex. Finally, each equatorial simplex has a unique $i$-face $F_i$ colored using colors in $\{d+1-i,\dots,d+1\}$, for $i=0,\dots,d-2$. The flag $F_0\subset F_1 \subset \dots \subset F_{d-2}$ so defined satisfies the second condition.
\end{proof}
\begin{remark}
    Observe that \Cref{lem:triangulation} shows that every balanced $d$-dimensional simplicial polytope has an even number of facets for every $d$, even when $d$ is even. More generally, this is true for every balanced $d$-dimensional \emph{pseudomanifold}, i.e., a simplicial complex with the property that every $(d-1)$-dimensional face is contained exactly in $2$ facets. This fact can be deduced by other means.
\end{remark}
    It is important to note that \Cref{lem:triangulation} guarantees that we can pair up the $d$-simplices of our triangulation, but the union of the two simplices is not in general convex. In other words the \emph{dual graph} of a balanced polytope, which is a bipartite graph (see e.g., \cite[Proposition 6]{Jos}), admits a perfect matching.
    The lemma thus shows that finding a balanced subdivision of the following class of objects is enough to positively answer \Cref{quest:main}: 
\begin{definition}
\label{def:bipyr}
    A \emph{generalized bipyramid} $\mathcal{B}$ is the union of two $d$-simplices $\mathcal{S}_1$ and $\mathcal{S}_2$ which intersect along a $(d-1)$-simplex $\po{E}$, called the \emph{equatorial simplex}, which is a face of both. Observe that $\mathcal{B}$ need not be a convex polytope. We fix a distinguished flag of faces of $\po{E}$, $F_0 \subsetneq F_1 \subsetneq \dots \subsetneq F_{d-2}\subsetneq F_{d-1}=\po{E}$. This data is part of the definition of a generalized bipyramid. 
\end{definition}
\begin{figure}[h]
\includegraphics[scale=0.5]{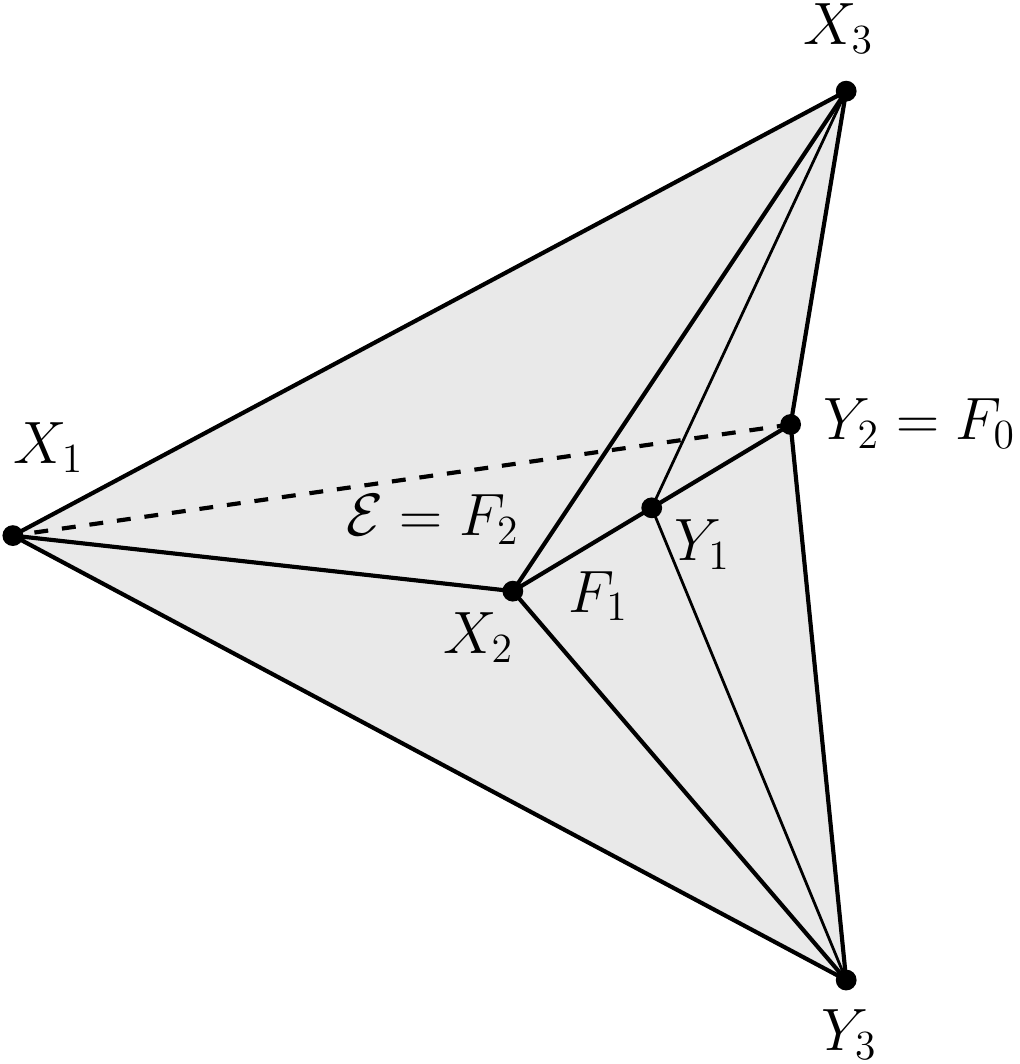}
\caption{A generalized bipyramid whose boundary is isomorphic to $\partial\Diamond_3$.}
\label{fig:generalized}
\end{figure}
    
We want to think of the generalized bipyramid as a degenerate cross-polytope, obtained by deforming an embedded codimension $1$ cross-polytope into a simplex. \Cref{fig:generalized} depicts the case $d=3$, where the $4$-cycle is transformed into a $3$-cycle and the two remaining vertices are perturbed. To make this precise, we consider the following triangulation of $\partial\po{B}$:
\[\Gamma=\text{sd}_{F_0}\circ\text{sd}_{F_1}\circ\dots\circ\text{sd}_{F_{d-2}}(\partial\po{B}) \]
where $\text{sd}_{F}(\Delta)$ is the \emph{stellar subdivision} of as simplicial complex $\Delta$ at a face $F$, that is, the simplicial complex obtained from $\Delta$ replacing all the faces containing $F$ (called the \emph{star} at $F$) with those given by the union of the barycenter of $F$ with every face of the boundary of its star. 
In words, to obtain $\Gamma$ we iteratively stellar subdivide $\partial{\po{B}}$ at the faces $F_i$ in decreasing dimension. See \Cref{fig:generalized} for a three dimensional example.

\begin{notation}
Here and for the rest of this article, we denote the vertices of $\Gamma$ as follows: for $1\leq i \leq d$ we let $X_{i}$ be the vertex of the $F_{d-i}$ not in $F_{d-i-1}$ and $Y_{i}$ be the barycenter of $F_{d-i-1}$, and we denote with $X_d$ and $Y_d$ the vertices not in the equatorial simplex $\mathcal{E}$. In what follows, we will assume the boundary of all generalized bipyramid to be subdivided as above, and with a slight abuse of notation will refer to the vertices of $\Gamma$ as vertices of the bipyramid $\po{B}$.
\end{notation}

\begin{remark}\label{rem:degeneration}
It is easy to see that the triangulation $\Gamma$ thus obtained is a simplicial complex isomorphic to the boundary of the $d$-cross-polytope, with the pairs $X_i,Y_i$ as opposite vertices. This is what allows us to think of $\po{B}$ as a degenerate cross-polytope.
\end{remark}
The affine dependence between the points $X_i,Y_i$ is described in the following lemma.

\begin{lemma}\label{rem:barycenters}
    The points $X_1,Y_1,\dots,X_{i},Y_{i}$ lie in a $i$-dimensional linear space $\mathcal{L}_{i}$, for $i=1,\dots,d$, with $\mathcal{L}_1 \subsetneq \mathcal{L}_2 \subsetneq \dots \subsetneq \mathcal{L}_{d-1}$; further, $X_{i}$ and $Y_{i}$ lie in opposite halfspaces defined by the hyperplane $\mathcal{L}_{i-1}$ in $\mathcal{L}_{i}$. 
\end{lemma}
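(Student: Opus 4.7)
The plan is to prove the lemma by induction on $i$, using affine identities between the $X_j$ and $Y_j$ together with barycentric coordinates on $\mathrm{aff}(\mathcal{E})$. The starting observation is the affine recurrence: for $1\le i\le d-1$, since $F_{d-i}=F_{d-i-1}\cup\{X_i\}$ as vertex sets, taking barycenters yields
\[
(d-i+1)\,Y_{i-1} \;=\; X_i + (d-i)\,Y_i,
\]
so that $Y_i\in\mathrm{aff}(X_i,Y_{i-1})$. Iterating this identity downward from a fixed level, every $Y_j$ with $1\le j\le i-1$ already lies in the affine span of $X_{j+1},\dots,X_{i-1},Y_{i-1}$. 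In particular, for $1\le i\le d-1$,
\[
\mathcal{L}_i \;:=\; \mathrm{aff}(X_1,Y_1,\dots,X_i,Y_i) \;=\; \mathrm{aff}(X_1,X_2,\dots,X_i,Y_i),
\]
and both $X_i$ and $Y_i$ lie in $\mathrm{aff}(\mathcal{L}_{i-1},X_i)$. So the only content left is to check the dimension, the strict nesting, and the sign condition.

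For this I would introduce barycentric coordinates $(t_1,\dots,t_d)$ on $\mathrm{aff}(\mathcal{E})$ relative to the ordered vertex set $(X_1,X_2,\dots,X_{d-1},v_0)$, where $v_0=Y_{d-1}$ is the unique vertex of $F_0$. A direct computation from the definitions gives
\[
X_k=\mathbf{e}_k \ (1\le k\le d-1), \qquad v_0=\mathbf{e}_d, \qquad Y_j=\tfrac{1}{d-j}\sum_{k=j+1}^{d}\mathbf{e}_k \ (1\le j\le d-1).
\]
From this one reads off that, for $1\le i\le d-1$, the subspace $\mathcal{L}_i$ is precisely the affine subspace of $\mathrm{aff}(\mathcal{E})$ cut out by the equations $t_{i+1}=t_{i+2}=\cdots=t_d$. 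This has dimension exactly $i$, and the chain $\mathcal{L}_1\subsetneq\mathcal{L}_2\subsetneq\cdots\subsetneq\mathcal{L}_{d-1}$ is immediate, since $\mathcal{L}_{i-1}$ is obtained from $\mathcal{L}_i$ by imposing the single additional equation $t_i=t_{i+1}$.

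The opposite-sides claim then follows at once: inside $\mathcal{L}_i$, the hyperplane $\mathcal{L}_{i-1}$ is the zero set of the affine linear function $h(t)=t_i-t_{i+1}$, and $h(X_i)=1>0$ while $h(Y_i)=-\tfrac{1}{d-i}<0$. The remaining case $i=d$ sits outside this coordinate framework: here $\mathcal{L}_d=\RR^d$ and $\mathcal{L}_{d-1}=\mathrm{aff}(\mathcal{E})$ is a genuine hyperplane. Since $\mathcal{S}_1$ and $\mathcal{S}_2$ are full-dimensional simplices with $\mathcal{S}_1\cap\mathcal{S}_2=\mathcal{E}$, their apices $X_d$ and $Y_d$ must lie in the two open halfspaces of $\RR^d$ determined by $\mathrm{aff}(\mathcal{E})$, since otherwise the interiors of $\mathcal{S}_1$ and $\mathcal{S}_2$ would overlap near an interior point of $\mathcal{E}$. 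The main (rather mild) obstacle is pinning down the correct affine recurrence between consecutive $Y_j$'s; once that is in hand, the barycentric coordinate setup delivers dimension, nesting, and signs in one stroke.
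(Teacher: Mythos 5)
Your proof is correct and rests on the same key fact as the paper's own argument: the barycenter recurrence $(d-i+1)\,Y_{i-1}=X_i+(d-i)\,Y_i$, which exhibits $Y_{i-1}\in\mathcal{L}_{i-1}$ as a proper convex combination of $X_i$ and $Y_i$ and hence forces these two points onto opposite sides. Your barycentric-coordinate formulation merely makes the dimension count, the strict nesting, and the sign check fully explicit (and, as a bonus, records the recurrence with the correct coefficients --- the paper's displayed formula for $Y_{i-1}$ has garbled indices --- and treats the $i=d$ case via the apices of $\mathcal{S}_1$ and $\mathcal{S}_2$, which the paper leaves implicit).
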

\begin{proof}
    Since $O$ is the barycenter of the equatorial simplex the points $O$, $X_1$ and $Y_1$ are aligned. We denote this linear space with $\mathcal{L}_1$. For every $i=2,\dots, d-1$ the point $Y_{i}$ is the barycenter of the simplex with vertices $\{X_{i+1},\dots, X_{d-1}\}\cup\{Y_{d-1}\}$, which implies that $Y_i$ lies on the line through $X_i$ and $Y_{i-1}$ and therefore $\mathcal{L}_{i}:=\text{span}(X_1,Y_1, \dots, X_{i},Y_{i})$ has $\dim(\mathcal{L}_i)=\dim(\mathcal{L}_{i-1})+1 $, for every $i=2,\dots,d-1$.
    Moreover, for $2\leq i \leq d-2$, $Y_{i-1}$ lies on the segment $X_{i},Y_{i}$:
    indeed, $Y_{i-1}=\sum_{j=2}^{i} \frac{X_{j}}{i}+\frac{Y_{d-1}}{i}=\frac{X_i}{i}+\frac{i-1}{i}Y_{i}$ holds by definition of barycenter. Since $X_{i}$ and $Y_{i}$ do not lie on $\mathcal{L}_{i-1}$, they must lie on either side.  
\end{proof}

\section{The Schlegel diagram of the 24-cell}\label{sect:schlegel}


The starting point of our decomposition is a regular convex $4$-polytope called the \emph{24-cell}, which can be realized as the convex hull of all vectors in $\RR^4$ with exactly two zero entries and two entries in $\{1,-1\}$.  Its boundary consists of 24 octahedral cells and therefore its Schlegel diagram provides a subdivision of the regular octahedron into 23 octahedra. This subdivision can be described as follows.
Consider the regular \emph{cuboctahedron} $\mathcal{H}=\conv(\{\lambda e_i+\mu e_j : \lambda,\mu\in \{-1,1\}, i\neq j\in\{1,2,3\}\})$.
Placing a scaled copy of $\mathcal{H}$ inside a regular octahedron $\po{C}_o$ (the subscript stands for "outer"), we observe that each square face of the cuboctahedron lies on a plane orthogonal to a line through antipodal points in the octahedron. Next, we add a second scaled regular octahedron $\po{C}_i$ (the "inner" octahedron) inside the cuboctahedron, again with the same center, and whose faces lie pairwise on planes parallel to the faces of the outer octahedron.\\
In this configuration to each of the 6 square faces of $\mathcal{H}$ correspond a vertex of $\po{C}_o$ and a vertex of $\po{C}_i$, and to each of the 8 triangular faces of $\mathcal{H}$ corresponds a $2$-face of $\po{C}_o$ and a $2$-face of $\po{C}_i$. This correspondence naturally gives rise to a cross-polytopal decomposition of the outer octahedron $\po{C}_o$ in 23 octahedra, which we divide in 4 types, illustrated in \Cref{fig_1}:
\begin{figure}[h]
\includegraphics[scale=0.9, trim={7cm 11cm 7cm 11cm}, clip]{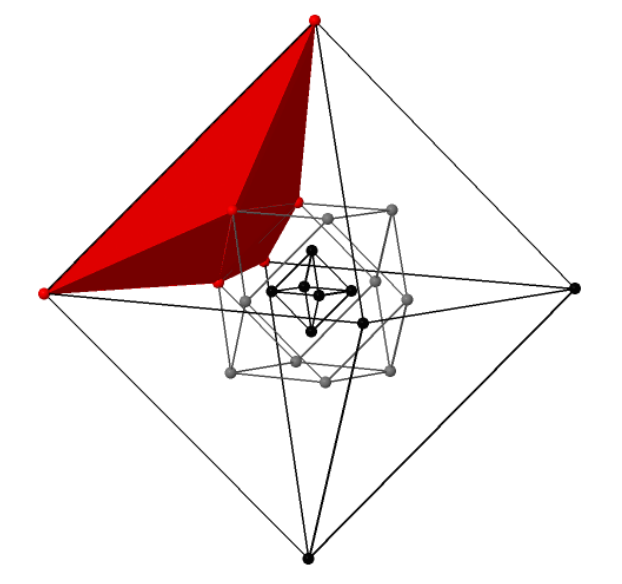}\qquad
\includegraphics[scale=0.9, trim={7cm 11cm 7cm 11cm}, clip]{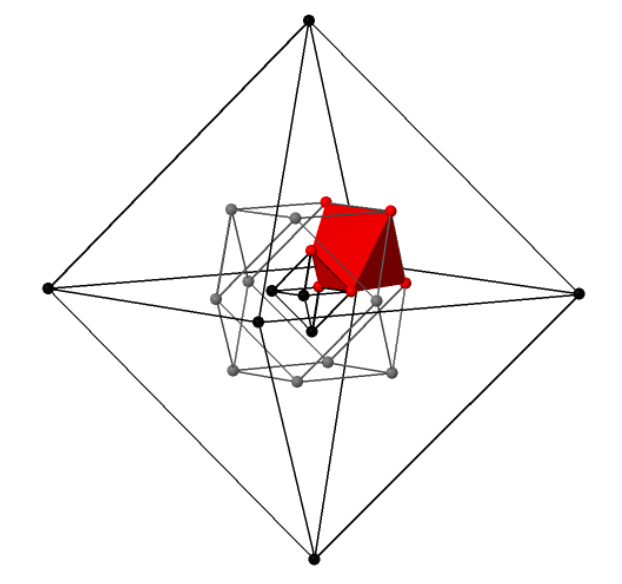}\\
\includegraphics[scale=0.9, trim={7cm 11cm 7cm 11cm}, clip]{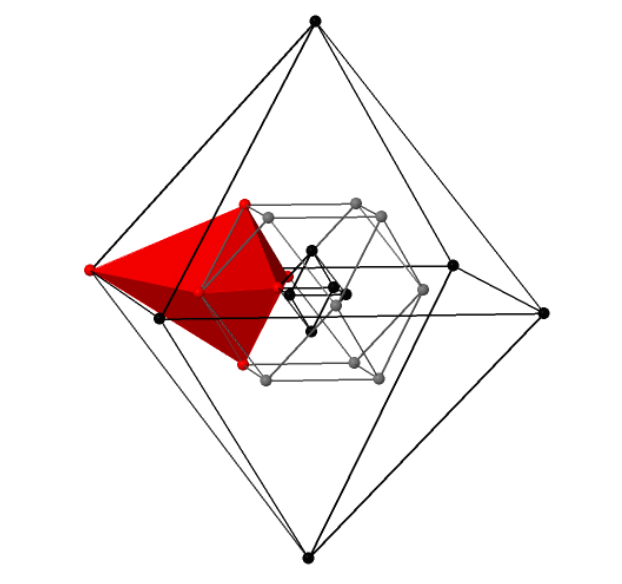}\qquad
\includegraphics[scale=0.9, trim={7cm 11cm 7cm 11cm}, clip]{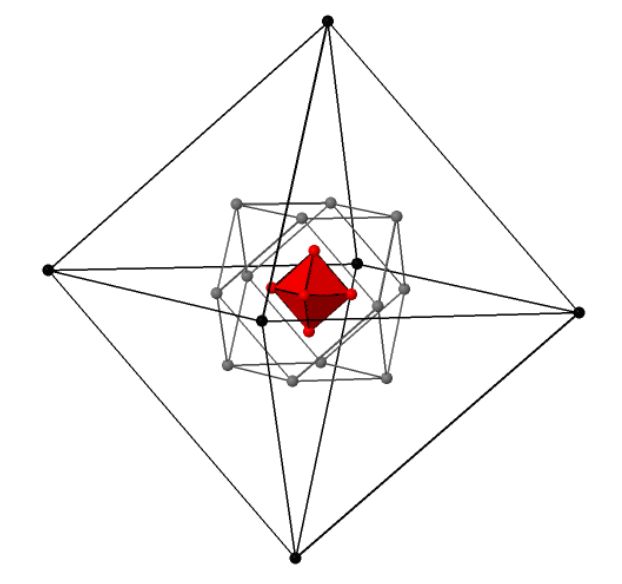}
\caption{The four types of octahedra in the Schlegel diagram of the 24-cell.}
\label{fig_1}
\end{figure}
\begin{itemize}
\item \textbf{Type 1}: Octahedra obtained as the convex hull of a triangular face of $\mathcal{H}$ and a face in $\po{C}_o$. There are 8 octahedra of type 1.
\item \textbf{Type 2}: Octahedra obtained as the convex hull of a triangular face of $\mathcal{H}$ and a face in $\po{C}_i$. There are 8 octahedra of type 2.
\item \textbf{Type 3}: Octahedra obtained as the convex hull of a square face of $\mathcal{H}$ and the two corresponding vertices of $\po{C}_o$ and $\po{C}_i$. There are 6 octahedra of type 3.
\item \textbf{Type 4}: The inner octahedron $\po{C}_i$.
\end{itemize}

\section{The subdivision}\label{sect:dim3}
This section is devoted to proving the following proposition, which allows us to prove \Cref{thm:3-colorable}.
\begin{proposition}\label{prop:bipyramid}
    Let $\mathcal{B}\subseteq\RR^3$ be a $3$-dimensional generalized bipyramid. There exists a (non-proper) geometric cross-polytopal sudivision $\Delta$ of $\mathcal{B}$ such that $\partial\Delta\simeq\partial\Diamond_3$. In particular there is one such $\Delta$ with $f_3(\Delta)=23$.
\end{proposition}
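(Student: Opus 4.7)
The plan is to construct the subdivision explicitly, using the Schlegel diagram of the $24$-cell from \Cref{sect:schlegel} as a combinatorial template. The $6$ outer vertices of that template are identified with $X_1,Y_1,X_2,Y_2,X_3,Y_3$ (the $6$ vertices of $\Gamma\simeq\partial\Diamond_3$, cf.~\Cref{rem:degeneration}), and the task is to place the $18$ remaining vertices of the template --- the $12$ ``cuboctahedron'' vertices $M_e$ (one for each edge of $\Gamma$) and the $6$ ``inner'' vertices $V'$ (one for each outer vertex) --- in the interior of $\mathcal{B}$ so that the $23$ Schlegel cells become genuine convex $3$-cross-polytopes.

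After normalizing $\mathcal{B}$ by an affine transformation so that the plane of $\mathcal{E}$ is $\{z=0\}$ and $Y_1$ is the midpoint of $X_2 Y_2$, I would place the $6$ inner vertices $V'$ as images of $V$ under a contraction toward a fixed interior reference point $C$ of $\mathcal{B}$, and the $12$ cuboctahedron vertices $M_e=M_{VW}$ as small interior perturbations of the midpoint $(V+W)/2$ in the direction of $C$. The essential geometric condition to enforce is that the four $M_e$'s on the four edges of $\Gamma$ through any outer vertex $V$ remain coplanar --- this ``square face'' is what makes the type~$3$ cell at $V$ a bipyramid over a quadrilateral, hence a $3$-cross-polytope. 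I would then define the $23$ cells according to the Schlegel recipe ($8$ of type~$1$, $8$ of type~$2$, $6$ of type~$3$, and the inner octahedron of type~$4$) and verify that together they form a cross-polytopal subdivision of $\mathcal{B}$ with boundary $\Gamma$.

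The main obstacle is finding a choice of the $M_e$'s that satisfies all the required constraints simultaneously. The naive choice $M_e=(V+W)/2$ collapses the construction: for every triangle $T$ of $\Gamma$ the three edge midpoints lie on the plane of $T$ itself, flattening the corresponding type~$1$ cell to a hexagon; and at the vertex $Y_1$, which lies on the interior edge $F_1$ of $\mathcal{E}$, the four edge midpoints are coplanar with $Y_1$, flattening the type~$3$ cell there. The $M_e$'s must therefore be perturbed into the interior of $\mathcal{B}$ in such a way that (i) every type~$1$ and type~$2$ cell becomes a genuine $3$-dimensional antiprism, (ii) the four $M_e$'s at each outer vertex remain coplanar, and (iii) each cell is convex, contained in $\mathcal{B}$, and the $23$ cells tile $\mathcal{B}$ without overlap. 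Producing such a parametric family of $(M_e,V')$ and then checking convexity and tiling cell by cell is the main technical work of the proof; once a valid configuration is exhibited, the combinatorial octahedral type of each cell and the tiling structure are inherited from the regular $24$-cell subdivision by continuity.
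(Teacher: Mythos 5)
Your high-level template is the same as the paper's: use the Schlegel diagram of the $24$-cell as a combinatorial blueprint, identify the six outer vertices with $X_1,Y_1,\dots,X_3,Y_3$, and realize the $8+8+6+1$ cells of types $1$--$4$ inside $\mathcal{B}$. However, what you have written is a plan rather than a proof: you correctly isolate the hard part (choosing the twelve ``cuboctahedron'' points and six inner points so that all $23$ cells are simultaneously non-degenerate convex cross-polytopes that tile $\mathcal{B}$) and then explicitly defer it. That deferred step \emph{is} the proof of \Cref{prop:bipyramid}; without it nothing has been established. The degeneracies you notice are real and in fact worse than stated: since $Y_1$ is the midpoint of the segment $X_2Y_2$, the two edges $Y_1X_2$ and $Y_1Y_2$ of $\Gamma$ are collinear, so at $Y_1$ two of your four midpoints start out on a line through $Y_1$, and it is not at all clear that a perturbation ``toward a fixed interior point $C$'' can restore both the coplanarity you impose and the convex-position requirements at all six vertices at once. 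No existence argument for such a configuration is given, and a continuity argument from the regular $24$-cell does not apply directly because $\Gamma$ is a \emph{degenerate} octahedron (three of its vertices are collinear and five are coplanar), not a small perturbation of a regular one.

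The paper resolves exactly this difficulty by not anchoring the cuboctahedron points to the boundary edges of $\mathcal{B}$ at all. It first builds an honest convex cross-polytope $\po{C}$ in the interior, with vertices $V_i,W_i$ on the segments $OX_i$, $OY_i$ (\Cref{lem:from_simplex_to_cp}, which itself needs the nested-linear-space structure of \Cref{rem:barycenters}); it then places the points $P_e$ on the \emph{edges of $\po{C}$} by cutting with affine hyperplanes parallel to the $\mathcal{L}_k$ and close to them (\Cref{lem:cubeoctahedron}), which makes the type $3$ and type $4$ cells unproblematic; and it recovers the type $1$ cells spanning back out to the boundary triangles of $\Gamma$ via the truncated-simplex argument of \Cref{lem:type12}, using \Cref{lem:antipodal} (which requires only that the three designated diagonals be non-edges, not that any quadrilateral be planar). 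If you want to complete your version, you would need either to carry out an analogous explicit construction with a verification for each cell type, or to import something like the paper's intermediate cross-polytope; as it stands, the central existence claim is asserted, not proved.
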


To prove this, we will construct a subdivision mimicking the one outlined for the regular octahedron in \Cref{sect:schlegel}. We first give an outline of the proof strategy. The lemmas we prove hold in any dimension, and so we state and prove them in that level of generality. Let $\po{B}$ be a generalized bipyramid with the origin $O$ as the barycenter of the equatorial simplex and let $\{X_i,Y_i\}_{i=1}^{d}$ be its vertices, or more precisely the vertices of the triangulation $\Gamma$ of its boundary (see \Cref{def:bipyr}).
\begin{itemize}
	\item In \Cref{lem:from_simplex_to_cp}, we construct a convex $d$-dimensional cross-polytope $\po{C}$ with vertices $\{V_i,W_i\}_{i=1}^{d}$, where vertex  $V_i$ (resp. $W_i$) lies on the segment joining $O$ and the vertex $X_i$ of $\po{B}$ ($Y_i$ resp.). For any $0 < \epsilon\leq 1$ the polytope $\epsilon\po{C}$ is a  $d$-dimensional cross-polytopes contained in $\po{B}$. In the proof of \Cref{thm:3-colorable}, $\epsilon\po{C}$ will play the role of the octahedron of type 4.
	
	\item Next, we show that we can choose a point $P_e$ in the interior of each edge $e$ of $\po{C}$ so that, for any vertex $V_i$ or $W_i$ of $\po{C}$, the polytopes $\mathcal{Q}_{V_i}=\conv(V_i,O,\{P_e: V_i\in e\})$ and $\mathcal{Q}_{W_i}=\conv(W_i,O,\{P_e: W_i\in e\})$ are $d$-dimensional cross-polytopes. This is proved in \Cref{lem:cubeoctahedron}.
	
	\item In \Cref{lem:type3}, we consider a modification of $\po{Q}_{V_i}$ and $\po{Q}_{W_i}$, namely $\po{Q}'_{V_i}=\conv(X_i, \epsilon V_i,\{P_e: V_i\in e\})$ and $\po{Q}'_{W_i}=\conv(Y_i, \epsilon W_i,\{P_e: W_i\in e\})$, and show that these are also $d$-dimensional cross-polytopes. In the $3$-dimensional setting, these octahedra correspond to type 3 octahedra.
	
	\item Finally, in \Cref{lem:type12} we construct the octahedra of type 1 and 2: we show that, if $\po{F}$ is any $(d-1)$-face of $\po{C}$, and $\po{F}$ and $\po{F'}$ the corresponding $(d-1)$-faces of $\po{B}$ and $\epsilon\po{C}$, for any choice of $d$ points $P_{\po{G}}$, each on a $(d-2)$-face $\mathcal{G}$ of $\po{F}$, the polytopes $\conv(\po{F},\{P_{\po{G}}\})$ and $\conv(\po{F'},\{P_{\po{G}}\})$ are $d$-dimensional cross-polytope.
\end{itemize}
To conclude the proof of \Cref{thm:3-colorable}, we observe that, in dimension $d=3$, the octahedra described above fit together to decompose the bipyramid. 

We begin with a series of lemmas. The first, though immediate, will be useful throughout the proof, since it presents a general fact on the combinatorial structure of the cross-polytope. 
\begin{lemma}\label{lem:antipodal}
    Let $\po{P}$ be a simplicial $d$-polytope on $2d$ vertices partitioned in $d$ pairs, such that each pair is not an edge of $\mathcal{P}$. Then $\po{P}$ is combinatorially isomorphic to a $d$-dimensional cross-polytope.
\end{lemma}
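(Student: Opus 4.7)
The plan is to identify the facets of $\po{P}$ with vectors in the Boolean cube $\{0,1\}^d$, and then use the pseudomanifold structure of $\partial\po{P}$ to force \emph{every} such vector to correspond to a facet. Label the $d$ non-edge pairs as $\{a_i,b_i\}$ for $i=1,\dots,d$. Since $\po{P}$ is simplicial each facet has $d$ vertices, and since $\{a_i,b_i\}$ is not an edge, no facet can contain both $a_i$ and $b_i$; with $d$ pairs and only $d$ vertex slots, each facet must contain exactly one vertex from each pair. Encoding these choices by $0$ or $1$, the set of facets of $\po{P}$ is identified with some subset $S\subseteq \{0,1\}^d$.

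Next I would consider the dual graph $G$ of $\partial\po{P}$: vertices are facets and edges connect pairs sharing a $(d-2)$-face. Two facets share a $(d-2)$-face iff their vertex sets agree on $d-1$ pairs and differ on the remaining one, so $G$ is literally the induced subgraph of the hypercube $Q_d$ on $S$. Because $\partial\po{P}$ is a simplicial $(d-1)$-sphere, hence a pseudomanifold, every $(d-2)$-face lies in exactly two facets, so every vertex of $G$ has degree exactly $d$. Thus $G$ is a $d$-regular induced subgraph of the $d$-regular graph $Q_d$, which forces every $Q_d$-neighbor of any vertex of $S$ to lie again in $S$. Since $Q_d$ is connected and $S\neq\emptyset$, this propagation argument yields $S=\{0,1\}^d$ and $G=Q_d$, so $\po{P}$ has exactly $2^d$ facets, namely all transversals of the pairs.

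To finish, I would match face lattices. In a simplicial polytope every face is a subset of a facet and is determined by its vertex set, so a subset $F\subseteq V(\po{P})$ is a face of $\po{P}$ precisely when it is contained in some facet, i.e.\ when it meets each pair $\{a_i,b_i\}$ in at most one vertex. This is the very description of the face lattice of $\Diamond_d$ with antipodal pairs $\{a_i,b_i\}$, giving the desired combinatorial isomorphism $\po{P}\simeq\Diamond_d$.

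The main obstacle I expect is the hypercube-closure step: one must argue cleanly that a nonempty $d$-regular induced subgraph of $Q_d$ must be all of $Q_d$, and that step is where the pseudomanifold (and hence the polytopality) of $\po{P}$ is really doing the work; the rest of the argument is essentially bookkeeping with the transversal identification.
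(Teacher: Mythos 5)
Your proof is correct and takes essentially the same route as the paper's: both reduce to showing that every transversal of the $d$ pairs is a facet, and both propagate from one facet to all $2^d$ transversals using the fact that each ridge lies in exactly two facets (your hypercube/dual-graph formulation is just a repackaging of that one-coordinate-flip iteration).
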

\begin{proof}
    We want to show that any set of vertices containing exactly one vertex of every pair (\emph{good set}) is a facet. Certainly any facet of $\po{P}$ must contain exactly one vertex of every pair. Let $F$ be a facet and $a \in F$ any vertex of $F$. The ridge $G = F\setminus \{a\}$ is in exactly two facets, and there are only two good sets containing $G$, $F$ and $F\setminus \{a\} \cup \{b\}$, where $b$ is paired with $a$. Thus $F\setminus \{a\} \cup \{b\}$ must also be a facet. In this way we can iteratively obtain that any good set is a facet.  
\end{proof}

We can now begin to prove the lemmas necessary for the proof that were outlined above.
\begin{lemma}\label{lem:from_simplex_to_cp}
	Let $\po{B}$ be a generalized bipyramid with vertices $\{X_1,Y_1,\dots,X_d,Y_d\}$ and let $O$ be the barycenter of its equatorial simplex. There exist a configuration of points $V_1,W_1, \dots, V_{d},W_{d}$, with $V_i$ on the segment $OX_i$ and $W_i$ on the segment $OY_i$, such that $\conv(V_1,W_1, \dots, V_d,W_{d})$ is a $d$-dimensional cross-polytope.  	
\end{lemma}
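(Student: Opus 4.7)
The plan is to construct the pairs $(V_k, W_k)$ inductively on $k = 1, \ldots, d$ so that at each stage
\[
\mathcal{P}_k := \conv(V_1, W_1, \ldots, V_k, W_k) \subseteq \mathcal{L}_k
\]
(with $\mathcal{L}_d := \RR^d$) is a $k$-dimensional cross-polytope with antipodal pairs $(V_j, W_j)_{j \le k}$ and contains $O$ in its relative interior; the polytope $\mathcal{P}_d$ then proves the lemma.

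The key ingredient will be the standard fact that if $Q$ is a $(k-1)$-dimensional cross-polytope in a hyperplane $H$ of $\RR^k$, and $v_+, v_-$ lie in opposite open halfspaces of $H$ such that the segment $\overline{v_+ v_-}$ meets the relative interior of $Q$, then $\conv(Q, v_+, v_-)$ is a $k$-dimensional cross-polytope (the bipyramid over $Q$ with apices $v_\pm$); this can be verified directly from \Cref{lem:antipodal}, noting that every good set (one vertex per antipodal pair) has the form $\{F, v_+\}$ or $\{F, v_-\}$ for a facet $F$ of $Q$ and supports a facet of the bipyramid.

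The base case $k = 1$ is immediate from the opening of the proof of \Cref{rem:barycenters}: the points $O, X_1, Y_1$ are collinear along $\mathcal{L}_1$ with $X_1, Y_1$ on opposite sides of $O$, so any choice $V_1 = \lambda_1 X_1$, $W_1 = \mu_1 Y_1$ with $\lambda_1, \mu_1 \in (0, 1]$ yields a segment $\mathcal{P}_1 = [W_1, V_1]$ through $O$. For the inductive step, \Cref{rem:barycenters} places $X_k$ and $Y_k$ in opposite open halfspaces of $\mathcal{L}_{k-1}$ inside $\mathcal{L}_k$; since $O \in \mathcal{L}_{k-1}$, the same holds for $V_k = \lambda_k X_k$ and $W_k = \mu_k Y_k$ for any $\lambda_k, \mu_k > 0$. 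A direct parametrization shows that the intersection point of $\overline{V_k W_k}$ with $\mathcal{L}_{k-1}$ tends to $O$ as $\lambda_k, \mu_k \to 0$, so for sufficiently small choices it lies in $\mathrm{relint}(\mathcal{P}_{k-1})$ by the inductive hypothesis. The key ingredient then yields that $\mathcal{P}_k$ is a $k$-dimensional cross-polytope, and the transverse positioning of $V_k, W_k$ with respect to $\mathcal{L}_{k-1}$ gives $\mathrm{relint}(\mathcal{P}_{k-1}) \subseteq \mathrm{relint}(\mathcal{P}_k)$, preserving the invariant $O \in \mathrm{relint}(\mathcal{P}_k)$.

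The main obstacle is ensuring the ``sufficient smallness'' of $\lambda_k, \mu_k$ in a way that survives the iteration, since the constraint at each step depends on the previously built $\mathcal{P}_{k-1}$. One may proceed adaptively, exploiting that the constraint at each step is open; alternatively, one can bypass the inductive smallness argument by fixing coordinates on $\mathcal{L}_{d-1}$ with $X_i = e_i$ and $Y_i = -(e_1 + \cdots + e_i)/(d-i)$ for $i \le d-1$, and then setting $V_i = e_i$, $W_i = -c(e_1 + \cdots + e_i)$ for a single small fixed $c > 0$ (together with any small $V_d, W_d$ on the apex segments), and verifying directly via \Cref{lem:antipodal} that every selection of one vertex from each antipodal pair spans a facet of the resulting convex hull.
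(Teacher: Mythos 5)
Your proof is correct and follows essentially the same inductive strategy as the paper's: build the cross-polytope one antipodal pair at a time, using \Cref{rem:barycenters} to place $X_k,Y_k$ on opposite sides of $\mathcal{L}_{k-1}$, and choose $V_k,W_k$ close enough to $O$ (your $\lambda_k,\mu_k\to 0$ is the paper's ball $B_k$) so that the segment $V_kW_k$ pierces the interior of the previously built $(k-1)$-dimensional cross-polytope. Your explicit tracking of the invariant $O\in\mathrm{relint}(\mathcal{P}_k)$ and the sketched coordinate-based alternative are fine additions, but the core argument coincides with the one in the paper.
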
	
\begin{proof}
    We place pairs of points on the segments $OX_{i}$ and $OY_{i}$ for $i=1,\dots,d$ and show that at each step their convex hull is a cross-polytope of increasing dimension. First we choose any two points $V_1$ and $W_1$ on the segments $OX_{1}$ and $ OY_{1}$. Remember from \Cref{rem:barycenters} that $  X_1,Y_1,X_2,Y_2$ lie in a $2$-dimensional linear space $\mathcal{L}_2$, with $X_2$ and $Y_2$ on either side of $\mathcal{L}_1$. By continuity, there exists an open ball $B_2\subseteq \mathcal{L}_2$ such that, for any choice of points $V_{2}$ and $W_{2}$ on the intersection between the segments $OX_{2}$ and $OY_{2}$ and $B_2$, the segment $V_{2}W_{2}$ intersects the segment $V_1W_1$ in the interior. Hence $\conv(V_1,W_1,V_2,W_2)$ is a quadrilateral (indeed a $2$-dimensional cross-polytope) whose interior contains $O$. Iteratively, assume $\conv(V_1,W_1,\dots,V_i,W_i)$ is an $i$-dimensional cross-polytope. Since it contains $O$ in the interior and $\mathcal{L}_{i}$ separates (in $\mathcal{L}_{i+1}$) $X_{i+1}$ and $Y_{i+1}$, there exists a ball $B_{i+1}$ in $\mathcal{L}_{i+1}$ such that for any choice of points $V_{i+1}$ and $W_{i+!}$ on $OX_{i+1}\cap B_{i+1}$ and $OY_{i+1}\cap B_{i+1}$ the segment $V_{i+1}W_{i+1}$ intersects the polytope $\conv(V_1,W_1,\dots,V_i,W_i)$ in the interior, and hence that $\conv(V_1,W_1,\dots,V_{i+1},W_{i+1})$ is a $(i+1)$-dimensional cross-polytope.
\end{proof}
\begin{figure}[h]
\includegraphics[scale=0.7]{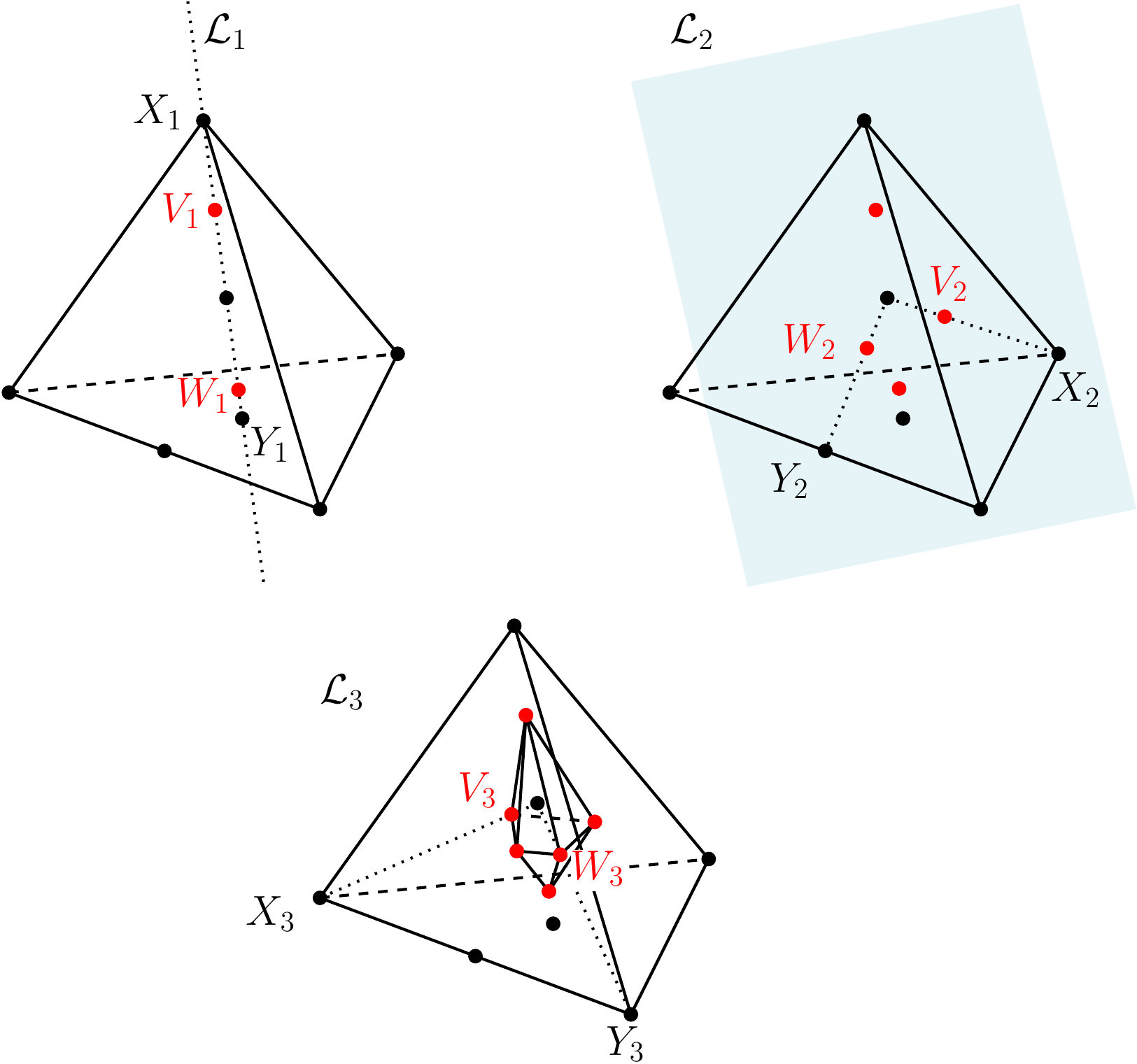}
\caption{The construction in \Cref{lem:from_simplex_to_cp} for $d=4$.}
\end{figure}

We let $\mathcal{C}$ be the cross-polytope $\mathcal{C}:=\conv(V_1,W_1,\dots,V_d,W_d)$ constructed in \Cref{lem:from_simplex_to_cp}.

\begin{lemma}\label{lem:cubeoctahedron}
    For every  edge $e$ of $\po{C}$, we can choose a point $P_e$ in the interior of $e$ such that, for any vertex $V_i$, $W_i$ of $\po{C}$, $\mathcal{Q}_{V_i}:=\conv(V_i,O,\{P_e: V_i\in e\})$ and $\mathcal{Q}_{W_i}:=\conv(W_i,O,\{P_e: W_i\in e\})$ are convex $d$-dimensional cross-polytopes.
\end{lemma}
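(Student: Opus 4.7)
The plan is to set $P_e = \tfrac{1}{2}(U+U')$, the midpoint of every edge $e = UU'$ of $\po{C}$, and to verify that each $\po{Q}_V$ is a cross-polytope via \Cref{lem:antipodal}. By the symmetry of the construction, it suffices to treat $V = V_i$.

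First I would apply the affine dilation $\phi\colon \RR^d \to \RR^d$, $\phi(x) = 2x - V$, centered at $V$ with ratio~$2$. This map fixes $V$, sends $O$ to the reflection $V' := 2O - V$, and sends each midpoint $\tfrac{1}{2}(V+U)$ to the adjacent vertex $U$ of $\po{C}$. Since $\phi$ is an affine isomorphism and hence preserves combinatorial type, it suffices to show that
\[
\widetilde{\po{C}} := \phi(\po{Q}_V) = \conv\bigl(V,\,V',\,V_j,W_j : j \neq i\bigr),
\]
obtained from $\po{C}$ by replacing the vertex $W_i$ with $V'$, is combinatorially a $d$-dimensional cross-polytope with antipodal pairs $(V, V')$ and $(V_j, W_j)$ for $j \neq i$. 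By \Cref{lem:antipodal} this reduces to checking that $\widetilde{\po{C}}$ is a simplicial polytope on $2d$ vertices and that each of the proposed pairs is a non-edge. The pair $(V, V')$ has midpoint $O$, while the pairs $(V_j, W_j)$ are inherited as diagonals from $\po{C}$.

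The main obstacle is verifying that the facet structure of $\widetilde{\po{C}}$ matches that of a cross-polytope, namely that each of the $2^d$ subsets obtained by choosing one vertex from each pair spans a facet of $\widetilde{\po{C}}$. The key observation is that since $O$ lies in the interior of $\po{C}$, the reflection $V'$ lies on the interior side of every facet hyperplane of $\po{C}$ through $V$: writing such a hyperplane as $\{\langle n, x\rangle = c\}$ with $\langle n, V\rangle = c$ and $\langle n, O\rangle < c$, one computes $\langle n, V'\rangle = 2\langle n, O\rangle - c < c$. This ensures that the $2^{d-1}$ facets of $\widetilde{\po{C}}$ containing $V$ are inherited from $\po{C}$. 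For the remaining $2^{d-1}$ facets containing $V'$, the supporting hyperplane is new, and the plan is to combine the interior-side observation with the cross-polytope structure of $\po{C}$---for instance via a continuity/deformation argument starting from a centrally symmetric cross-polytope, where $V' = W_i$ identically and the claim is tautological---to pin down the required separation of the remaining vertices. The case $V = W_i$ is entirely analogous, using the dilation $x \mapsto 2x - W_i$.
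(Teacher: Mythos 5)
Your choice $P_e=\tfrac12(U+U')$ does not work in general, and this is the crux of the problem: via your own dilation $\phi(x)=2x-V_i$, the polytope $\mathcal{Q}_{V_i}$ is a cross-polytope precisely when $\conv\bigl(V_i,\,2O-V_i,\,\{V_j,W_j:j\neq i\}\bigr)$ is one with the stated antipodal pairs, and nothing in \Cref{lem:from_simplex_to_cp} forces the reflected point $2O-V_i$ to lie outside $\conv\bigl(V(\po{C})\setminus\{W_i\}\bigr)$ --- it need not even be a vertex. Concretely, take $O$ to be the origin and
\[
V_1=(1,0,0),\quad W_1=(-10,0,0),\quad V_2=(-5,1,0),\quad W_2=(-5,-1,0),\quad V_3=(-5,0,1),\quad W_3=(-5,0,-1).
\]
One checks with \Cref{lem:antipodal} that $\po{C}=\conv(V_1,W_1,\dots,W_3)$ is an octahedron with antipodal pairs $(V_i,W_i)$ containing $O$ in its interior, and that it is a legitimate output of \Cref{lem:from_simplex_to_cp} for a suitable bipyramid (each segment $V_iW_i$ meets the interior of the previously built cross-polytope at $(-5,0,0)$). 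The four midpoints of the edges through $V_1$ form a square in the plane $x=-2$, and $O$ lies in the \emph{interior} of the pyramid $\conv(V_1,\{P_e:V_1\in e\})$ (at $x=0$ that pyramid's cross-section is the square $|y|+|z|\le 1/6$). Hence $\mathcal{Q}_{V_1}$ is a square pyramid on five vertices, not an octahedron; equivalently, $2O-V_1=(-1,0,0)$ is interior to $\conv(V_1,V_2,W_2,V_3,W_3)$. Your closing suggestion of a deformation from the centrally symmetric case cannot repair this: $\po{C}$ is a fixed polytope, it is in general not centrally symmetric about $O$ (only the diagonal $V_1W_1$ is forced to pass through $O$; the rays $OX_i$ and $OY_i$ are not antiparallel for $i\ge 2$, so one cannot even arrange $W_i=-V_i$), and there is no parameter to deform.

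The portion of your argument that is sound is the observation that $2O-V_i$ lies strictly on the interior side of every facet hyperplane of $\po{C}$ through $V_i$; what is missing, and what fails for midpoints, is that $2O-V_i$ must also lie \emph{beyond} every facet of $\po{C}$ through $W_i$. The paper secures the analogous separation by choosing the $P_e$ asymmetrically and inductively along the flag $\mathcal{L}_1\subsetneq\dots\subsetneq\mathcal{L}_d$ of \Cref{rem:barycenters}: each new $P_e$ is the intersection of $e$ with a translate of $\mathcal{L}_{k-1}$ taken \emph{sufficiently close} to $\mathcal{L}_{k-1}$, so $P_e$ sits near the endpoint of $e$ in the smaller subspace, and the needed non-edge conditions for \Cref{lem:antipodal} follow from a limiting/continuity argument. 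Any correct uniform recipe must let $P_e$ divide $e$ in a ratio depending on the geometry of $\po{C}$ and $O$, not $1{:}1$.
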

\begin{proof}
    
    
    It follows from the proof of \Cref{lem:from_simplex_to_cp} that the points $\{V_{1}, W_1, \cdots,V_i, W_{i}\}$ lie on a $i$-dimensional linear space $\mathcal{L}_i$, and each $\mathcal{L}_i$ separates $V_{i+1}$ and $W_{i+1}$ in $\mathcal{L}_{i+1}$.
    
    The proof of this statement is by induction: we choose points $P_e$ on edges $e$ contained in $\mathcal{L}_k$ but not in $\mathcal{L}_{k-1}$, and prove by induction on $k$ that for any $i \leq k$ $\mathcal{Q}^{k}_{V_i}:=\conv(V_i,O,\{P_e: V_i\in e, e \in \mathcal{L}_k\}) \subset \mathcal{L}_k$ is a $k$-dimensional cross-polytope (analogously $\mathcal{Q}^{k}_{W_i}$). Since $\mathcal{Q}^{d}_{V_i}= \mathcal{Q}_{V_i}$, this will prove the lemma.
    
    For $k=1$, the statement above is trivially true, since $\mathcal{Q}^{1}_{V_1}$ and $\mathcal{Q}^{1}_{W_1}$ are the segments with endpoints the origin and $V_1$, $W_1$ respectively.
    
    Now suppose $k>1$. By inductive hypothesis, we have picked points $P_e$ on all edges contained in $\mathcal{L}_{k-1}$. We now want to choose points $P_e$ on the new edges in $\mathcal{L}_k$, that is, we must pick points on the edges connecting $\{V_1, W_1, \cdots, V_{k-1}, W_{k-1}\}$ and $\{V_k, W_k\}$. To do so, we choose $(d-1)$-dimensional affine spaces $\mathcal{A}_{k-1}$ and $\mathcal{A}'_{k-1}$, translations of the linear space $\mathcal{L}_{k-1}$, respectively towards $V_k$ and $W_k$, such that they intersect $\mathcal{C}$ in its interior; we let the points $P_e$ be the intersection of the edges $e$ with these affine spaces. We want to show that $\mathcal{A}_{k-1}$ and $\mathcal{A}'_{k-1}$ can be chosen so that $\mathcal{Q}^{k}_{V_i}$ is a $k$-dimesional cross-polytope for all $i \leq k$.
    We split this statement into the following two claims:\\
    \textbf{Claim 1:}
    $\mathcal{Q}^{k}_{V_k}$ and $\mathcal{Q}^{k}_{W_k}$ are $k$-dimensional cross-polytopes for any choice of $\mathcal{A}_{k-1}$ and $\mathcal{A}'_{k-1}$.\\
    \textbf{Claim 2:} 
    For $i<k$, $\mathcal{Q}^{k}_{V_i}$ and $\mathcal{Q}^{k}_{W_i}$ are $k$-dimesional cross-polytopes if $\mathcal{A}_{k-1}$ and $\mathcal{A}'_{k-1}$ are chosen sufficiently close to $\mathcal{L}_k$.\\
    \emph{Proof of Claim 1}. It suffices to observe that $\conv(\{P_e: V_i \in e, e \subset \mathcal{L}_k\})$ is a translation and dilation of $\mathcal{C}\cap \mathcal{L}_{k-1}$, and hence a $(k-1)$-dimensional cross-polytope. Since for any choice of $\mathcal{A}_{k}$ the segment $V_kO$ intersects the interior of $\conv(\{P_e: V_i \in e, e \subset \mathcal{L}_k\})$, \Cref{lem:antipodal} guarantees that $\mathcal{Q}^{k}_{V_i}$ is a $k$-dimensional cross-polytope. In the same way we can prove the claim for $\mathcal{Q}^{k}_{W_k}$.\\
    \emph{Proof of Claim 2}.
    By construction we have $\mathcal{Q}^{k-1}_{V_i}=\mathcal{Q}^{k}_{V_i} \cap \mathcal{L}_{k-1}$, which by inductive hypothesis is a $(k-1)$-dimensional cross-polytope. $\mathcal{Q}^{k}_{V_i}$ has two new vertices $P$ and $Q$, which are the points of intersection of $\mathcal{A}_k$ and $\mathcal{A}'_k$ respectively with the edges $V_iV_k$ and $V_iW_k$. By \Cref{lem:antipodal}, it is sufficient to show is that if we choose $\mathcal{A}_k$ and $\mathcal{A}'_k$ sufficiently close to $\mathcal{L}_k$, the segment $PQ$ will intersect the interior of $\mathcal{Q}^{k-1}_{V_i}$. This is true by a continuity argument, since $PQ$ intersects the interior of $\mathcal{C} \cap \mathcal{L}_{k-1}$, and when the affine spaces coincide with $\mathcal{L}_k$, we have $P=Q=V_i$. In the same way we can prove the claim for $\mathcal{Q}^{k}_{W_i}$.

\end{proof}

\begin{figure}[h]
\includegraphics[scale=0.75]{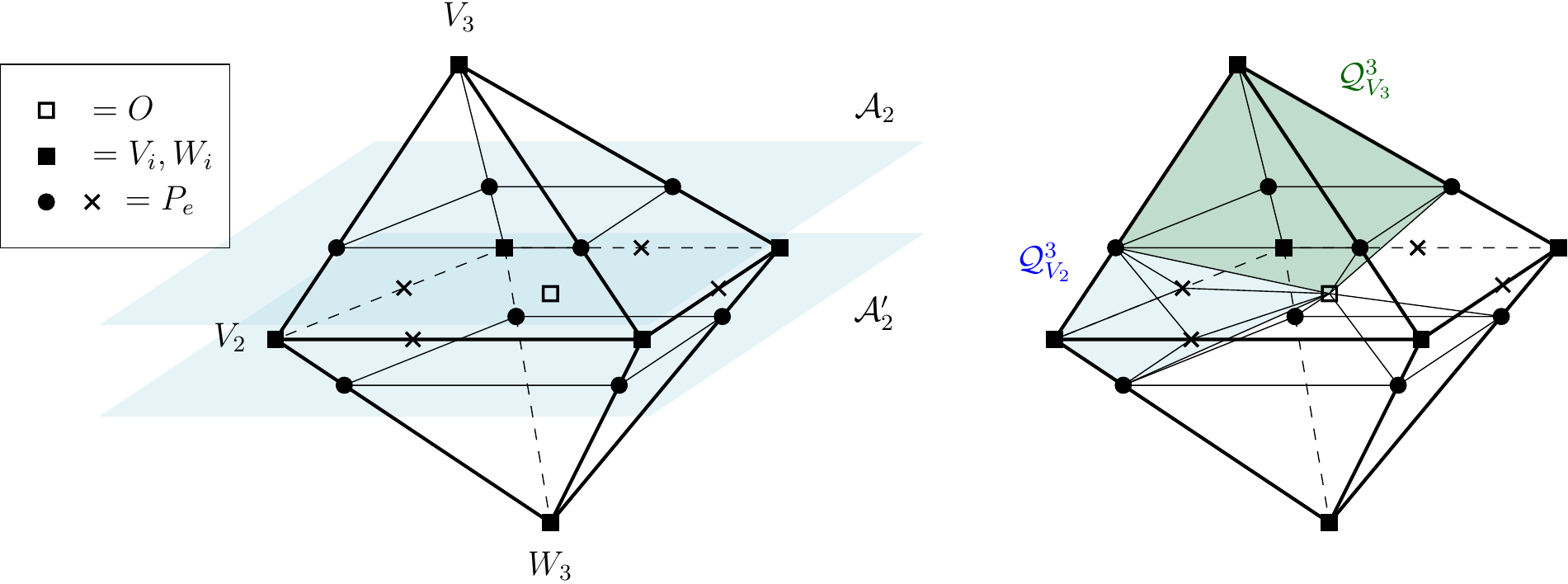}
\caption{Some of the cross-polytopes constructed in \Cref{lem:cubeoctahedron} for $d=3$.}
\label{fig: 4.6}
\end{figure}
\Cref{fig: 4.6} offers a partial visualization of \Cref{lem:cubeoctahedron}. In the following lemma we construct the cross-polytopes of type 3. 
\begin{lemma}\label{lem:type3}
	For any configuration of points as in \Cref{lem:cubeoctahedron}, $\conv(X_i,\epsilon V_i, \{P_e: V_i\in e\})$ and $\conv(Y_i,\epsilon W_i, \{P_e: W_i\in e\})$ are $d$-dimensional cross-polytope for any $i=1,\dots,d$. 
\end{lemma}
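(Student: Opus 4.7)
My plan is to exhibit each of the polytopes in the statement as a bipyramid over a $(d-1)$-dimensional cross-polytope whose two apexes lie on opposite sides of the base hyperplane, and then invoke \Cref{lem:antipodal} to conclude. I will treat $\conv(X_i, \epsilon V_i, \{P_e : V_i\in e\})$ in detail; the other case is strictly symmetric.

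By \Cref{lem:cubeoctahedron} the polytope $\mathcal{Q}_{V_i} = \conv(V_i, O, \{P_e : V_i \in e\})$ is a $d$-dimensional cross-polytope with opposite vertices $V_i$ and $O$. Consequently $\mathcal{R}_i := \conv(\{P_e : V_i \in e\})$ is a $(d-1)$-dimensional cross-polytope lying in an affine hyperplane $H_i$ that separates $V_i$ from $O$, and the segment $V_i O$ meets the relative interior of $\mathcal{R}_i$ in a single point $p_i^{*}$. Taking $O$ as the origin, write $p_i^{*} = t_i V_i$ with $t_i \in (0,1)$. Since $V_i$ lies in the segment $OX_i$, we have $X_i = \mu_i V_i$ for some $\mu_i > 1$, which places $X_i$ strictly on the $V_i$-side of $H_i$. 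Choosing $\epsilon$ small enough that $\epsilon < \min_{j} t_j$ (and analogously for the $W_j$ setup), the point $\epsilon V_i$ lies strictly on the $O$-side of $H_i$, and $p_i^{*}$ lies in the open segment $(\epsilon V_i, X_i)$.

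Therefore $\mathcal{Q}'_{V_i} := \conv(X_i, \epsilon V_i, \mathcal{R}_i)$ is a bipyramid over $\mathcal{R}_i$ whose apex segment crosses the base in its relative interior. Its facets are $\conv(F, X_i)$ and $\conv(F, \epsilon V_i)$ as $F$ ranges over the (simplicial) facets of $\mathcal{R}_i$, so $\mathcal{Q}'_{V_i}$ is a simplicial $d$-polytope on $2d$ vertices. Pair its vertices as $\{X_i, \epsilon V_i\}$ together with the $d-1$ antipodal pairs inherited from $\mathcal{R}_i$: the first pair is a non-edge because two apexes of a bipyramid never share a facet, while the remaining pairs are non-edges of $\mathcal{R}_i$ and remain non-edges in the bipyramid since no new edges are introduced between base vertices. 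Applying \Cref{lem:antipodal} yields that $\mathcal{Q}'_{V_i}$ is combinatorially a $d$-cross-polytope, and the same reasoning handles $\conv(Y_i, \epsilon W_i, \{P_e : W_i \in e\})$. The only subtle point is verifying the separation $\epsilon < t_i$, which is precisely where $\epsilon$ must be taken sufficiently small relative to the configuration of the $P_e$.
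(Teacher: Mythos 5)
Your overall strategy --- slide the two apexes of $\mathcal{Q}_{V_i}$ along the line through $O$ and $V_i$, check that for $\epsilon$ small enough the new apex segment still crosses $\conv(\{P_e : V_i \in e\})$ where the old one did, and conclude via \Cref{lem:antipodal} --- is the same as the paper's, and the quantitative condition $\epsilon < \min_j t_j$ that you isolate is exactly the right one. There is, however, a genuine gap at the step ``Consequently $\mathcal{R}_i$ is a $(d-1)$-dimensional cross-polytope lying in an affine hyperplane $H_i$.'' Neither the combinatorial fact that $\mathcal{Q}_{V_i}$ is a cross-polytope with antipodal pair $\{V_i,O\}$ nor the construction of \Cref{lem:cubeoctahedron} forces the $2(d-1)$ points $\{P_e : V_i \in e\}$ to be affinely dependent. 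Combinatorially: perturbing one equatorial vertex of a regular octahedron slightly off the equatorial plane leaves it combinatorially an octahedron, yet its four ``equatorial'' vertices then span a tetrahedron. Concretely in the construction: the point on the edge $V_iV_j$ is chosen at stage $k=\max(i,j)$ on the slab $\mathcal{A}_{k-1}$ (or $\mathcal{A}'_{k-1}$), so for $i<d$ the points $P_e$ with $V_i\in e$ are distributed over several different affine subspaces and are coplanar only by accident; only for $i=d$ do they all lie on the single hyperplane $\mathcal{A}_{d-1}$. Since $d=3$ is the case feeding into \Cref{thm:3-colorable}, this already matters for $i=1,2$.

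Everything downstream rests on that claim: the description of $\mathcal{Q}'_{V_i}$ as a bipyramid over $\mathcal{R}_i$, the facet list $\conv(F,X_i)$, $\conv(F,\epsilon V_i)$, and hence the verification that the polytope is simplicial on $2d$ vertices with the required non-edges; if $\mathcal{R}_i$ is full-dimensional none of these is available as stated. What you can actually extract from \Cref{lem:cubeoctahedron} is weaker but sufficient: the line through $O$ and $V_i$ meets $\conv(\{P_e : V_i\in e\})$ in a nonempty subsegment $\{tV_i : t\in[a_i,b_i]\}$ with $0<a_i\le b_i<1$, and for $\epsilon<a_i$ this subsegment is unchanged when $[O,V_i]$ is replaced by $[\epsilon V_i, X_i]$. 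This is precisely how the paper argues: it transfers the antipodal structure of $\mathcal{Q}_{V_i}$ to $\conv(X_i,\epsilon V_i,\{P_e : V_i\in e\})$ without ever asserting that the $P_e$ are coplanar. You should either justify coplanarity where it actually holds, or replace the bipyramid framing by this direct transfer.
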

\begin{proof}
    Since $X_i, V_i, \epsilon V_i$ and the origin are aligned, as long as we choose $\epsilon$ small enough, the intersection of $\{P_e: V_i\in e\})$ with the segment $V_iO$ is the same as its intersection with the segment with endpoints $X_i$ and $\epsilon V_i$. Since $\conv(V_i,O,\{P_e: V_i\in e\})$ is a $d$-dimensional cross-polytope, by \cref{lem:antipodal} $\conv(X_i,\epsilon V_i, \{P_e: V_i\in e\})$ is a cross-polytope.
\end{proof}


Next we construct the cross-polytopes of type 1 and 2. We call a \emph{truncation} of a simplex $\mathcal{S}$ w.r.t. a vertex $v$ a polytope obtained intersecting $\mathcal{S}$ with the halfspace defined by an hyperplane separating $v$ from the other vertices, which does not contain $v$.
\begin{lemma}\label{lem:type12}
	Let $\po{P}$ be a truncation of a $d$-simplex $\po{S}$. Denote by $\po{F}$ the facet opposite the truncated vertex, and $\po{F}'$ the new facet introduced by the truncation. For any hyperplane $h$ which separates $\po{F}$ and $\po{F}'$ and for any choice of $d$ points $P_{\po{G}}$, one in the interior of each facet $\po{G}$ of the $(d-1)$-simplex $h\cap\po{P}$, $\conv(\{P_{\po{G}}\},\po{F})$ and $\conv(\{P_{\po{G}}\},\po{F}')$ are $d$-dimensional cross-polytopes.
\end{lemma}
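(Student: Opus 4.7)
The plan is to apply \Cref{lem:antipodal} to the polytope $\mathcal{X} := \conv(\mathcal{F}, P_{\mathcal{G}_1}, \ldots, P_{\mathcal{G}_d})$ with a natural pairing of its $2d$ vertices; an analogous argument handles $\conv(\mathcal{F}', \{P_{\mathcal{G}}\})$. Writing the vertices of $\mathcal{S}$ as $v_0, v_1, \ldots, v_d$ with $v_0$ the truncated vertex (so $\mathcal{F} = \conv(v_1, \ldots, v_d)$), the vertices $w_i$ of the $(d-1)$-simplex $h \cap \mathcal{P}$ each lie on the lateral edge of $\mathcal{P}$ contained in $\overline{v_0 v_i}$. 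Labelling the facets of $h \cap \mathcal{P}$ as $\mathcal{G}_j := \conv(\{w_i : i \neq j\})$, each $P_{\mathcal{G}_j}$ is a strictly positive convex combination of the $w_i$ for $i \neq j$, so it admits a barycentric expression on $\mathcal{S}$ of the form $P_{\mathcal{G}_j} = \mu_j v_0 + \sum_{i \neq j} \nu_{j,i} v_i$ with $\mu_j, \nu_{j,i} > 0$. Equivalently, the projection of $P_{\mathcal{G}_j}$ from $v_0$ onto $\text{aff}(\mathcal{F})$ lies in the relative interior of the facet of $\mathcal{F}$ opposite $v_j$. The natural pairing is $(v_j, P_{\mathcal{G}_j})$.

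I would first verify that $\mathcal{X}$ is a simplicial $d$-polytope with $2d$ vertices: $\mathcal{F}$ is a facet of $\mathcal{X}$ (since $\text{aff}(\mathcal{F})$ supports it with all $P_{\mathcal{G}_j}$ strictly on the $h$-side), so the $v_i$ are vertices, and the $(d-1)$-simplex $\conv(\{P_{\mathcal{G}_j}\}) \subset h$ is the opposite facet, giving the remaining $d$ vertices. Then comes the main step: showing each pair $(v_j, P_{\mathcal{G}_j})$ is not an edge. I would argue by contradiction: if such a pair were an edge, there would be a linear functional $\ell$ attaining its maximum $c$ on $\mathcal{X}$ exactly on $\conv(v_j, P_{\mathcal{G}_j})$. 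Setting $x_i := \ell(v_i)$ (so $x_j = c$ and $x_i < c$ for $i \neq j$) and $y := \ell(v_0)$, the equation $\ell(P_{\mathcal{G}_j}) = c$ combined with $\mu_j > 0$ forces $y > c$. The remaining strict inequalities $\ell(P_{\mathcal{G}_k}) < c$ for $k \neq j$ then translate to a system of linear inequalities in the positive quantities $a_i := c - x_i$ ($i \neq j$), which I aim to show is infeasible.

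The infeasibility would exploit the structural constraint that all $w_i$ lie in the common hyperplane $h$, pinning down the factorization $\nu_{k,i} = \alpha_i^{(k)} \lambda_i$, with $\lambda_i \in (0,1)$ parameterizing $w_i$ on $\overline{v_0 v_i}$ and $\alpha_i^{(k)}$ the barycentric coordinates of $P_{\mathcal{G}_k}$ on $\mathcal{G}_k$. In the symmetric case (all $\lambda_i$ equal, each $P_{\mathcal{G}_k}$ a barycenter) a direct pairwise combination of the $P_{\mathcal{G}_k}$-inequalities yields a contradiction, as can be checked by hand for $d=3$; in general, one expects an analogous combination to work, possibly supplemented by a continuity/perturbation argument based at the symmetric reference configuration. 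The principal obstacle is precisely this algebraic verification of infeasibility in full generality, since the system grows with $d$. Once established, \Cref{lem:antipodal} identifies $\mathcal{X}$ as a $d$-dimensional cross-polytope; the claim for $\conv(\mathcal{F}', \{P_{\mathcal{G}}\})$ follows by the same argument, using that projection from $v_0$ onto $\text{aff}(\mathcal{F}')$ sends $P_{\mathcal{G}_j}$ into the relative interior of the facet of $\mathcal{F}'$ opposite the vertex $v_0'_j \in \overline{v_0 v_j}$.
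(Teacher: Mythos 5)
Your setup --- the pairing $(v_j,P_{\mathcal{G}_j})$, the verification that $\mathcal{X}$ is a simplicial $d$-polytope with $\mathcal{F}$ and $\conv(\{P_{\mathcal{G}_j}\})$ as opposite facets, and the reduction to \Cref{lem:antipodal} --- is the same as the paper's. But the one step that carries the lemma, namely that each pair $(v_j,P_{\mathcal{G}_j})$ is a non-edge, is exactly the step you leave open: you verify infeasibility of your linear system only for the symmetric configuration in $d=3$, and defer the rest to ``an analogous combination'' plus a perturbation argument. This is a genuine gap, and the perturbation idea cannot close it: the lemma quantifies over \emph{all} hyperplanes $h$ and \emph{all} choices of the points $P_{\mathcal{G}}$, so an argument based at one reference configuration only covers a neighbourhood of it. Worse, the infeasibility you are hoping for is actually \emph{false} for $d\geq 4$: setting $z_i=\ell(w_i)$, your constraints only force $z_j>c$ and force one fixed positive average of $\{z_i\}_{i\neq j}$ to equal $c$; for $d=4$, $j=1$, one can realize $z_1=c+1$, $z_2=c+30$, $z_3=z_4=c-10$ (choose $\ell(v_0)\gg c$ and suitable $\lambda_i$), put $P_{\mathcal{G}_1}$ at weights $(\tfrac14,\tfrac38,\tfrac38)$ on $(w_2,w_3,w_4)$ so that $\ell(P_{\mathcal{G}_1})=c$, and push each $P_{\mathcal{G}_k}$, $k\neq1$, close enough to the $w_i$ with $z_i<c$ that $\ell(P_{\mathcal{G}_k})<c$. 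Then $\ell$ supports the polytope exactly along $\conv(v_1,P_{\mathcal{G}_1})$, which is therefore an edge. So no algebraic identity will establish the statement at the generality you (and the lemma) claim.

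For $d=3$ --- the only case the paper's main theorems use --- the system \emph{is} infeasible, and the missing argument is three lines, which you should supply: from $\ell(P_{\mathcal{G}_1})=\alpha_2z_2+\alpha_3z_3=c$ with positive weights, some $z_i\geq c$, say $z_2\geq c$; since $y>c$ gives $z_1>c$, the value $\ell(P_{\mathcal{G}_3})$, a positive convex combination of $z_1$ and $z_2$, is $>c$, contradicting $\ell(P_{\mathcal{G}_3})<c$. For comparison, the paper avoids the linear system entirely via a geometric fact you derive but do not use: $P_{\mathcal{G}_j}$ is a strictly positive combination of $v_0$ and the $v_i$, $i\neq j$, hence lies in the relative interior of the facet of $\mathcal{S}$ opposite $v_j$, so the open segment $v_jP_{\mathcal{G}_j}$ runs through the interior of $\mathcal{S}$ while staying in the closed $\mathcal{F}$-side of $h$; the paper concludes from this that the segment is not a face of $\conv(\mathcal{F},\{P_{\mathcal{G}}\})$. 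Note that the example above shows this conclusion, too, genuinely needs $d\leq 3$, so whichever route you take, the lemma should be proved (or restated) in that case only.
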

\begin{proof}
	For $\po{G}=\conv(V(\po{F})\setminus v)$ we denote with $P_{\po{G}}$ the chosen point on the $(d-2)$-face of $h\cap\po{P}$ which corresponds to $\po{G}$. Consider $\conv(\{P_{\po{G}}\},\po{F})$. Clearly $\{P_\po{G}\}$ and $\po{F}$ are facets, since $\po{F}$ lies in one of the halfspace defined by the supporting hyperplane $h$ of $\{P_\po{G}\}$. 
	
	For any vertex $V$ of $\po{F}$, the segment $\conv(V,P_{\po{G}})$ with $\po{G}=\conv(V(\po{F})\setminus V)$ intersects the interior of $\po{S}$, because it joins $V$ with a point in the interior of $\conv(V(\po{S})\setminus V)$. Moreover $\conv(V,P_{\po{G}})$ is also contained in one of the halfspaces defined by $h$. Therefore $\conv(V,P_{\po{G}})$ is not a face of $\conv(\{P_{\po{G}}\},\po{F})$ for any $(d-2)$-face $\po{G}=\conv(V(\po{F})\setminus V)$, which implies that $\conv(\{P_{\po{G}}\},\po{F})$ is a $d$-cross-polytope by \Cref{lem:antipodal}. The proof for $\conv(\{P_{\po{G}}\},\po{F}')$ is analogous. 
\end{proof}	
\Cref{fig:prism} shows the cross-polytopes $\conv(\{P_{\po{G}}\},\po{F})$ (blue) and $\conv(\{P_{\po{G}}\},\po{F}')$ (red) in the $3$-dimensional case.
\begin{figure}[h]
\includegraphics[scale=0.9]{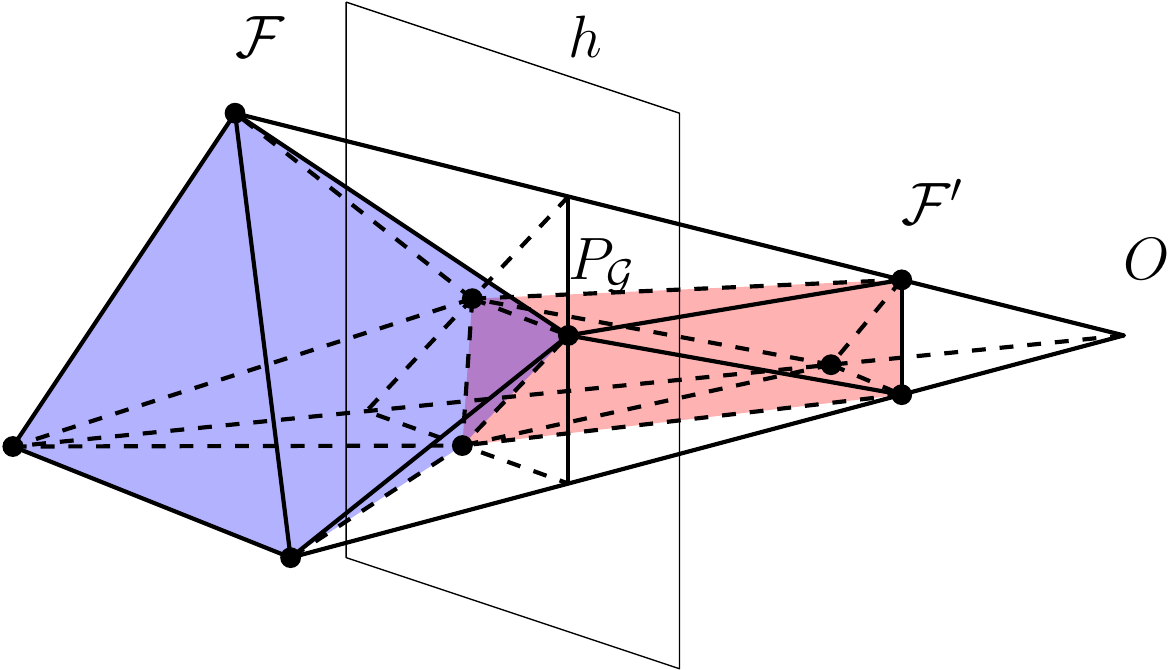}
\caption{An illustration of \Cref{lem:type12} in the $3$-dimensional case.}\label{fig:prism}
\end{figure}
We can finally put together the pieces to prove \Cref{prop:bipyramid}:
\begin{proof}[Proof of \Cref{prop:bipyramid}]
    We assume that the barycenter of the equatorial simplex of $\mathcal{B}$ is the origin $O$. We proceed as the outline at the beginning of this section. First, \Cref{lem:from_simplex_to_cp} allows us to construct an octahedron $\po{C}$ whose vertices lie on the segments $OX_i$ and $OY_i$, $i=1,\dots,3$. \Cref{lem:type3} and \Cref{lem:type12} guarantee the existence of a choice of points $P_e$ on the edges of $\po{C}$ and of a number $\epsilon\in(0,1)$ such that the polytopes
    \begin{itemize}
        \item $\conv(\mathcal{F}, \{P_e: e\subseteq \mathcal{F} \})$ (type 1, 8 polytopes),
        \item $\conv(\mathcal{F'}, \{P_e: e
        \subseteq \mathcal{F'} \})$ (type 2, 8 polytopes),
        \item $\conv(X_i,\epsilon V_i, \{P_e: V_i\in e\})$ and $\conv(Y_i,\epsilon W_i, \{P_e: W_i\in e\})$ (type 3, 6 polytopes), 
    \end{itemize}
    are octahedra, for any facets $\po{F}$ and $\po{F'}$ of $\mathcal{B}$ and $\epsilon\po{C}$ respectively. The statement follows letting $\Delta$ to be the cross-polytopal complex generated by these 22 octahedra, together with $\epsilon \po{C}$. Indeed all of the octahedra lie inside of $\po{B}$ and it is immediate to check that the intersection of two octahedra in $\Delta$ is a face of both. Moreover, since every $2$-dimensional face of $\Delta$ which is not on the boundary of $\po{B}$ lies in exactly two octahedra, we have that $\left|\Delta\right|=\mathcal{B}$.  
\end{proof}
We are now ready to prove \Cref{thm:3-colorable}. Recall that for a simplicial $3$-polytope Euler relation and a double counting argument show that the number of edges and $2$-faces are uniquely determined by the number of vertices. In particular we have that $f_2(\mathcal{P})=2(f_0(\po{P})-2)$.
\begin{proof}[Proof of \Cref{thm:3-colorable}]
    For any triangulation of $\po{P}$ with the conditions of \Cref{lem:triangulation}, the $3$-simplices in the triangulation can be pairwise matched in $f_2(\mathcal{P})/2$ many generalized bipyramids. It is important to observe that the second condition in \Cref{lem:triangulation} ensures that we can consider the barycenters of one of the edge for each equatioral simplex, so that for each bipyramid exactly one edge on the equator is subdivided. By \Cref{prop:bipyramid} there exists a geometric cross-polytopal subdivision on $23$ octahedra for each of the generalized bipyramids. The union of these $f_2(\mathcal{P})/2$ many geometric cross-polytopal complexes gives a proper cross-polytopal subdivision $\Delta$ of $\po{P}$ with $f_3(\Delta)=23f_2(\mathcal{P})/2=23(f_0(\mathcal{P})-2)$ octahedra. 
\end{proof}
\begin{remark}\label{rem:Eran}
    In \Cref{sect:bipyramid}, we reduced the problem of finding an octahedralization of a balanced $d$-polytope to that of the generalized bipyramid.
    However, we can apply verbatim the same construction described in \Cref{sect:dim3} directly to a simplex. Indeed, if $\mathcal{S}$ is a simplex, and $F_0\subset\dots\subset F_{d-1}$ a flag of faces in its boundary, then $\Gamma'=\text{sd}_{F_0}\circ\text{sd}_{F_1}\circ\dots\circ\text{sd}_{F_{d-2}}(\partial\po{S})$ is a subdivision of $\partial\mathcal{S}$ that is combinatorially isomorphic to the boundary of a $d$-dimensional cross-polytope. All the results in \Cref{sect:dim3} carry over. Decomposing a balanced $3$-polytope in this way would produce a cross-polytopal decomposition with $23f_2(\mathcal{P})=46(f_0(\mathcal{P})-2)$ many octahedra, that is twice as many as the strategy using bipyramids yields.
\end{remark}
\Cref{thm:3simplex} follows directly from this remark.
\section{Concluding questions}
    This paper leaves \Cref{quest:main} open in the case $d\geq 4$. The reason is that we are not aware of the existence of a polytope with 'many' cross-polytopal facets in dimensions higher than $4$, in analogy with the $24$-cell in dimension $4$, whose Schlegel diagram would be a starting point of our construction.
    
    Indeed, in dimensions higher than $3$, before embarking on the geometrical question, one might want to understand whether the following, combinatorial statement holds. Recall that a pure CW-complex is \emph{strongly regular} if the intersection of two cells is a single (possibly empty) cell. A $d$-dimensional strongly regular cross-polytopal complex is therefore a pure, strongly regular $d$-dimensional CW-complex in which all maximal cells are combinatorially isomorphic to $\Diamond_d$. 
    
\begin{question}\label{quest:main_strongly_reg}
    Is any boundary of a balanced $d$-polytope realizable as the boundary of a $d$-dimensional strongly regular cross-polytopal complex homeomorphic to a $d$-ball, for $d\geq 4$? 
\end{question}    
   
    Due to convexity, polytopal complexes are strongly regular CW-complexes, and so \Cref{thm:3-colorable} provides a positive answer in the three dimensional case. A negative answer to this question would of course imply a negative answer to \Cref{quest:main}.
    
    For $d\geq 4$ there are combinatorial $(d-1)$-spheres (i.e., simplicial complexes PL-homeomorphic to the boundary of a $d$-simplex) which cannot be realized as the boundary of a polytope. It is not always easy to check whether a sphere has this property or not. Therefore we can generalize \Cref{quest:main_strongly_reg} to the following one, which is interesting in its own right and may be easier to answer.
\begin{question}
    Is any balanced combinatorial $(d-1)$-sphere realizable as the boundary of a $d$-dimensional strongly regular cross-polytopal complex homeomorphic to a $d$-ball, for $d\geq 4$? 
\end{question}    
    A negative answer to this question would not however necessarily give an obstruction to \Cref{quest:main}. As mentioned in the introduction, \cite[Theorem 3.1]{IKN} answers this question positively for balanced combinatorial (even simplicial) spheres without the assumption of strong regularity. In a sense, strongly regular cross-polytopal complexes are an intermediate step between the complexes considered in \cite{IKN} and geometric cross-polytopal complexes.

\section*{Acknowledgement}
We would like to thank Martina Juhnke-Kubitzke for suggesting the problem to us and for insightful discussions and comments on the manuscript. Many thanks to Francisco Santos for interesting discussions and comments on the manuscript, and in particular pointing out the decomposition of the octahedron as the Schlegel diagram of the $24$-cell. We thank Eran Nevo for carefully listening to our presentation and suggesting the simplification of \Cref{rem:Eran}, which allows us to give \Cref{thm:3simplex} in its final form. A thank you also to Hannah Sj{\"o}berg for pointing out \Cref{lem:antipodal}. 

\bibliographystyle{alpha}
\bibliography{bibliography}

\begin{thebibliography}{JKMNS18}

\bibitem[DLRS10]{DRS}
J.~A. De~Loera, J.~Rambau, and F.~Santos.
\newblock {\em Triangulations: Structures for Algorithms and Applications}.
\newblock Springer Publishing Company, Incorporated, 1st edition, 2010.

\bibitem[Fis77]{Fisk1977}
S.~Fisk.
\newblock Geometric coloring theory.
\newblock {\em Advances in Math.}, 24(3):298--340, 1977.

\bibitem[IJ03]{Izmestiev:Joswig}
I.~Izemestiev and M.~Joswig.
\newblock Branched coverings, triangulations, and 3-manifolds.
\newblock {\em Adv. Geom.}, 3(2):191--225, 2003.

\bibitem[IKN17]{IKN}
I.~Izmestiev, S.~Klee, and I.~Novik.
\newblock Simplicial moves on balanced complexes.
\newblock {\em Adv. Math.}, 320:82--114, 2017.

\bibitem[JKM18]{JKM}
M.~Juhnke-Kubitzke and S.~Murai.
\newblock Balanced generalized lower bound inequality for simplicial polytopes.
\newblock {\em Selecta Math. (N.S.)}, 24(2):1677--1689, 2018.

\bibitem[JKMNS18]{Juhnke:Murai:Novik:Sawaske}
M.~Juhnke-Kubitzke, S.~Murai, I.~Novik, and C.~Sawaske.
\newblock A generalized lower bound theorem for balanced manifolds.
\newblock {\em Math. Z.}, 289(3-4):921--942, 2018.

\bibitem[JKV18]{JKV}
M.~Juhnke-Kubitzke and L.~Venturello.
\newblock {Balanced shellings and moves on balanced manifolds}.
\newblock {\em ArXiv e-prints}, April 2018.
\newblock \url{https://arxiv.org/abs/1804.06270}.

\bibitem[Jos02]{Jos}
M.~Joswig.
\newblock Projectivities in simplicial complexes and colorings of simple
  polytopes.
\newblock {\em Math. Z.}, 240(2):243--259, 2002.

\bibitem[KN16]{KN}
S.~Klee and I.~Novik.
\newblock Lower bound theorems and a generalized lower bound conjecture for
  balanced simplicial complexes.
\newblock {\em Mathematika}, 62(2):441--477, 2016.

\bibitem[Sta79]{St79}
R.~P. Stanley.
\newblock Balanced {C}ohen-{M}acaulay complexes.
\newblock {\em Trans. Amer. Math. Soc.}, 249(1):139--157, 1979.

\bibitem[Ven19]{Ven}
L.~Venturello.
\newblock Balanced triangulations on few vertices and an implementation of
  cross-flips.
\newblock {\em Electron. J. Combin.}, 26(3):Paper 3.61, 24, 2019.

\bibitem[Zie95]{ZieglerBook}
G.~M. Ziegler.
\newblock {\em Lectures on Polytopes}.
\newblock Graduate texts in mathematics. Springer-Verlag, 1995.

\end{thebibliography}
\end{document}